\newtheorem{theorem}{Theorem}[section]
\newtheorem{conjecture}[theorem]{Conjecture}
\newtheorem{corollary}[theorem]{Corollary}
\newtheorem{proposition}[theorem]{Proposition}
\newtheorem{lemma}[theorem]{Lemma}
\theoremstyle{definition} 
\newtheorem{definition}[theorem]{Definition}
\newtheorem{remark}[theorem]{Remark}
\begin{document}

\title{Paradoxical behavior in Collatz sequences}

\author{Olivier Rozier and Claude Terracol}

\date{}

\maketitle

\begin{abstract}
On the set of positive integers, we consider the iterative process that maps $n$ to either $\frac{3n+1}{2}$ or $\frac{n}{2}$ depending on the parity of $n$. The Collatz conjecture states that all such sequences eventually enter the trivial cycle $(1,2)$. In a seminal paper, Terras further conjectured that the proportion of odd terms encountered when starting with an integer $n\geq2$ is sufficient to determine its stopping time, namely, the number of iterations needed to descend below $n$. However, when iterating beyond the stopping time, there exist ``paradoxical'' sequences of finite length whose first term is unexpectedly exceeded, given the proportion of odd terms. In the present study, we show that this non-typical behavior is closely related to the Collatz conjecture. Furthermore, we find that it most likely occurs finitely many times, thus lending support to Terras' conjecture.
\end{abstract}

\hspace{10pt}


\section{Introduction}
\label{sec:intro}

According to the Collatz conjecture, whatever the positive integer $n$ taken as first term, iterating the function 

\begin{equation}\label{eq:T}
T(n) = \left\{\begin{array}{ll}
  \frac{3n+1}{2} & \mbox{if $n$ is odd,} \\
  \frac{n}{2} & \mbox{if $n$ is even,} 
\end{array}\right.
\end{equation}
gives rise to a sequence $n, T(n), T^{2}(n), \ldots$ which invariably reaches the trivial cycle $(1,2)$. As is easily seen, the rise and fall of the sequence is mainly governed by the relative proportions of odd and even terms encountered. For a sequence of $j$ iterations with $q$ odd terms (before the last term), we have a seemingly linear expression in the variable $n$ of the form
\begin{equation}\label{eq:linear}
T^{j}(n) = \frac{3^{q}}{2^{j}} \; n + E_j(n)
\end{equation}
where the fraction $\frac{3^{q}}{2^{j}}$ is called the \textit{coefficient}, hereafter noted $C_j(n)$. The rational number $E_j(n)$, called the \textit{remainder} in \cite{Ter76}, can be considered as an error term that accounts for all the increments occurring at every odd iterate. This term only depends on the parities of the successive iterates, namely, the \textit{parity vector} of $n$. We omit the general expression of $E_j(n)$ as it already appears in many papers \cite{Gar81,Lag85,Tao22,Ter76} and is superfluous in our study. When trying to predict whether $T^{j}(n)$ exceeds the first term $n$ of the sequence, it is often convenient to focus on $C_j(n)$ and to neglect the contribution of $E_j(n)$, which seems reasonable for large $n$ (see, e.g., \cite[p7]{Tao22}). Hence, comparing the ratio $\frac{q}{j}$ to the threshold value $\frac{\log 2 }{\log 3}$ is usually considered as a fairly reliable criterion in this regard \cite{Ter76}.

The aim of our investigations will be to determine to what extent the coefficient $C_j(n)$ controls whether the sequence grows or declines with respect to the first term. Since $E_j(n) \geq 0$, it is obvious that $T^{j}(n)  \geq n$ as soon as $C_j(n) \geq 1$. However, the converse does not hold. If we omit the trivial cases linked to the cycle $(1,2)$, the most simple example is the sequence of 8 iterations 
$$7, 11, 17, 26, 13, 20, 10, 5, 8$$
with coefficient $\frac{3^{5}}{2^{8}} = \frac{243}{256} < 1$ and remainder $E_8(7) = \frac{347}{256}$. We suggest calling \textit{paradoxical} those finite sequences that somehow behave unexpectedly.

\begin{definition}\label{def:par}
For positive integers $j$ and $n$ with $n>2$, the Collatz sequence $n$, $T(n)$, \ldots, $T^{j}(n)$ is \textit{paradoxical} if there hold the two conditions
\begin{enumerate}[(i)]
\item $C_j(n) = \frac{3^{q}}{2^{j}} < 1$ or, equivalently, $\frac{q}{j} < \frac{\log 2 }{\log 3}$ where $q$ is the number of odd terms among $n$, $T(n)$, \ldots, $T^{j-1}(n)$;
\item $T^{j}(n)  \geq n$. \label{cond:par_def2}
\end{enumerate}
\end{definition}

For convenience, we exclude the sequences that just repeat the trivial cycle a finite number of times from this definition.

The set of paradoxical sequences consists of two disjoint subsets, the first one for cyclic sequences ($T^{j}(n) = n$) and the second one for acyclic sequences ($T^{j}(n) > n$). The first subset is closely related to the set of nontrivial cycles for the following reason: Any Collatz sequence that starts and ends at the same integer (greater than 2) and has at least one iteration of $T$ is necessarily paradoxical. This case has already been thoroughly studied and it is known that the existence of nontrivial cycles is strongly constrained in terms of their length \cite{Cra78,Eli93} or the number of their local minima \cite{Kni26,Roz90,Sim05}. Therefore, the first subset is expected to be empty, which is consistent with the Collatz conjecture.

As for the story behind our study of the second subset, it all began about ten years ago when the second author (Terracol), a retired engineer, sent an informal manuscript to the first author. This resulted in an attempt to categorize Collatz sequences more effectively. Incidentally, we found a few examples of acyclic paradoxical sequences\footnote{Designated as ``parcours paradoxaux'' in French, they were based on a different formalism that ensured  only odd terms were considered. Hence, they proved difficult to find at once.}. A previous paper by the first author \cite{Roz17} was partly inspired by this research, though it did not mention this finding. In the present paper, we focus on these sequences alone as they appear to be largely unrecognized despite their clear connections with previous research (e.g., \cite{Alb22,Nat24}).

The existence of acyclic paradoxical sequences has undoubtedly been observed by many, although it is hard to find a publication mentioning them. Some have even assumed their nonexistence. In fact, this concept appears in a seminal paper by Terras \cite{Ter76} in a restricted and somewhat disguised form through the notions of \textit{stopping time} (entry A126241 in \cite{OEIS}) and \textit{coefficient stopping time}.

\begin{definition} \label{def:st}
Let $n$ be a positive integer.
\begin{enumerate}[(1)]
\item Its \textit{stopping time} $t(n)$ is the least integer $j$ such that $T^j(n) < n$ if it exists, otherwise $t(n) = + \infty$.
\item Its \textit{coefficient stopping time} $\tau(n)$ is the least integer $j$ such that $C_j(n) < 1$ if it exists, otherwise $\tau(n) = + \infty$. 
\end{enumerate}
\end{definition}

Since the remainder is always nonnegative, it is clear that $t(n) \geq \tau(n)$ for all $n$. Terras conjectured that equality holds for  $n \geq 2$ (Conjecture 2.9 in \cite{Ter76}), which was named Coefficient Stopping Time (CST) conjecture by Lagarias \cite[p11]{Lag85}. It is important because it implies the absence of nontrivial cycles. If indeed we assume the existence of a nontrivial cycle of length $l$, then its smallest term $n$ would have an infinite stopping time whereas $\tau(n) \leq l$.
Moreover, the CST conjecture is linked to our study through the three equivalent formulations below.
\begin{enumerate}[C1:]
\item $t(n) = \tau(n)$ for all $n \geq 2$ (CST conjecture);
\item the number of iterations in a paradoxical sequence is always larger than the stopping time of its first term;
\item the first term of a paradoxical sequence is never the smallest.
\end{enumerate}

To see this, observe that a counterexample to one of these statements would be a counterexample to the other two. The previous example of paradoxical sequence, which starts at $n=7$, is consistent with C2 and C3 because it attains the value 5 before ending at 8. One can also verify that $t(7)=\tau(7)=7$ in agreement with C1. In a sense, our approach places this conjecture in a broader context by asking how many paradoxical sequences exist, a question not addressed by Terras nor by Lagarias.

Additionally, a better understanding of paradoxical sequences is of interest for solving equations of the form $T^j(n) - n = d$ for a given nonnegative integer $d$, among which the most relevant case is $d=0$. It can be verified that all solutions for $d=1$ and $n > 3$ are due to paradoxical sequences, such as the sequence starting at $n=7$ and ending at 8 (see Appendix \ref{ap:near-cycles}).

Our main results are summarized in the following theorem. Its proof is partly based on the first author's computations and partly on numerical data from Oliveira e Silva \cite{Oli10} and Roosendaal \cite{Roo}.

\begin{theorem}
There are exactly 593 paradoxical sequences that begin with an integer lower than or equal to 4614. If there are any more of them, they must start above $2.8 \times 10^{19}$. Conversely, if there exist no others, then the Collatz conjecture is true.
\end{theorem}

Note that sequences differing in their first term or length are considered distinct, so that there are only 550 integers known to initiate a paradoxical sequence (see the computational results in Section \ref{sec:comp}).

First, we study in Section \ref{sec:poset} how the remainder term behaves with respect to the set of parity vectors by introducing a partial order on the latter. Then, we show in Section \ref{sec:div_par} that the Collatz conjecture can be derived from the assumption that there is only a finite number of paradoxical sequences. In Sections \ref{sec:prop_par} and \ref{sec:comp}, we establish various necessary conditions for a sequence to be paradoxical and verify numerically that there is none if the first term is above 4614 and below $2.8 \times 10^{19}$. Finally, we conduct in Section \ref{sec:heuristic} a heuristic analysis that leads us to conjecture  the absence of any paradoxical sequence starting above 4614.


\section{A partial order on parity vectors}
\label{sec:poset}

One may expect that paradoxical sequences are more likely to occur for large values of the remainder terms $E_j$. Therefore, it seems relevant to better understand the relationship between the remainders on an arbitrary number $j$ of iterations and their associated parity vectors of length $j$. To this aim, we need to define formally a few notions and agree on some convenient notation.

\begin{definition} \label{def:q_E_Vj}
For positive integers $j$ and $n$, let $q_j(n)$ denote the number of odd terms in the finite sequence $\left( T^{k}(n) \right)_{k=0}^{j-1}$. When appropriate, we write $q$ instead of $q_j(n)$, $E$ instead of $E_j(n)$ and $C$ instead of $C_j(n)$, for convenience, and keep in mind that it depends on $j$ and $n$. Moreover, we denote by $V_j(n) \in \left\lbrace 0, 1 \right\rbrace^j$ the \textit{parity vector} of length $j$ induced by $n$, formally defined by
$$V_j(n) = \left(  n, T(n), \ldots , T^{j-1}(n) \right) \mod 2.$$
We call \textit{ones-ratio} of a parity vector $V_j(n)$ the rational quantity $\frac{q}{j}$ which gives the proportion of 1's in $V_j(n)$.

To further simplify notation, we write any given parity vector as a \textit{binary word}, i.e., as a concatenation of characters taken from the set $\left\lbrace 0,1 \right\rbrace $.  When the same parity is repeated more than once, we remove the repetition by indicating the number of occurrences in superscript. For example, the binary word $\left\langle  1^{2} \, 0^{3} \, 1 \right\rangle$ stands for $\left\langle  1 1 0 0 0 1 \right\rangle$ and has a ones-ratio of $\frac{1}{2}$.
\end{definition}

Following an idea of the second author, we propose defining a partial order, denoted by $\preceq$, on the set of parity vectors. While this notion is relevant for our understanding of the remainder terms, it is not really necessary for proving the main result of this section, which can be established more directly.

\begin{definition} \label{def:poset}
Whenever two parity vectors $V$ and $V'$ of the same length can be written under the form $V=\left\langle  w_{1} \, 0 1 \, w_{2} \right\rangle$ and $V'=\left\langle  w_{1} \, 1 0 \, w_{2} \right\rangle$ where $w_{1}$ and $w_{2}$ are (possibly empty) binary words, we say that $V$ (strictly) \textit{precedes} $V'$, which is denoted by $V \prec V'$. Next, we extend this relation by adding transitivity: If the vectors $V$, $V'$ and $V''$ satisfy $V \prec V'$ and $V' \prec V''$, then $V \prec V''$. Similarly, we write $V \preceq V'$ if $V \prec V'$ or $V = V'$. The relation $\prec$ is consistent with the lexicographical order on binary words. Therefore, $\preceq$ is antisymmetric and induces a partial order on parity vectors. Its associated graph is directed and acyclic.
\end{definition}

The relation $V \prec V'$ between two parity vectors occurs whenever $V$ can be changed into $V'$ by repeatedly shifting 1's to the left, as though all 0's were empty spaces on an abacus. It is easy to find two distinct vectors that are incomparable through the relation $\prec$ since only vectors of the same length with the same ones-ratio may possibly be ordered. These conditions are however not sufficient when their length is at least 4, as shown by Figure \ref{fig:poset}b where the vectors $\left\langle 0110 \right\rangle$ and $\left\langle 1001 \right\rangle$ are not ordered. Similarly, in Figure \ref{fig:poset}c, the vector $\left\langle 01110 \right\rangle$ cannot be compared with any of the vectors $\left\langle 11001 \right\rangle$, $\left\langle 10101 \right\rangle$ and $\left\langle 10011 \right\rangle$.

More generally, when considering two parity vectors $V=(v_0,\ldots,v_{j-1})$ and $V'=(v'_0,\ldots,v'_{j-1})$ of length $j$, $V \preceq V'$ if and only if there hold the two conditions
\begin{enumerate}[(i)]
\item $\sum_{i=0}^{k} v_i \leq \sum_{i=0}^{k} v'_i$ \quad for $0 \leq k \leq j-2$;
\item $\sum_{i=0}^{j-1} v_i = \sum_{i=0}^{j-1} v'_i$.
\end{enumerate}
This partial order has been named \textit{unordered majorization} in \cite[\S5.D, p.\,198]{Mar11}. 
It is closely related to the \textit{majorization} or \textit{dominance} order which is widely used across various contexts involving real or integer numbers. Based on the same conditions (i) and (ii), the majorization order requires the components $v_i$ and $v'_i$ to be sorted in decreasing order, thus making it poorly suited for sequences of 0's and 1's.

\begin{figure}
\centering
\includegraphics[width=1.0\textwidth]{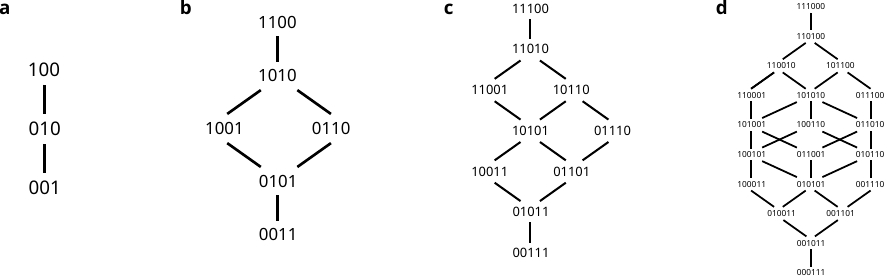}
\caption{Hasse diagrams of \textbf{(a)} the total order on parity vectors of length 3 and ones-ratio $\frac13$, and of \textbf{(b-d)} the partial orders on parity vectors of length 4, 5, 6 and ones-ratios $\frac12$, $\frac35$, $\frac12$, in that order.}
\label{fig:poset}
\end{figure}

\begin{lemma}\label{lem:poset_Ej}
If $j$, $m$ and $n$ are positive integers such that $V_j(m) \prec V_j(n)$, then $E_j(m) > E_j(n)$.
\end{lemma}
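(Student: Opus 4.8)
The plan is to work directly with an explicit closed form for the remainder and reduce everything to the covering relation of the partial order. Write the parity vector of $m$ as $(v_0,\ldots,v_{j-1})$, where $v_i$ is the parity of $T^i(m)$. The single Collatz step can be written uniformly as $T(x) = \frac{3^{v}x+v}{2}$ when $x$ has parity $v$, and iterating this recursion gives
\[
T^j(m) = \frac{3^{\,v_0+\cdots+v_{j-1}}}{2^j}\, m + \sum_{i=0}^{j-1} \frac{v_i \, 3^{\,s_i}}{2^{\,j-i}}, \qquad s_i := \sum_{k=i+1}^{j-1} v_k .
\]
Hence $E_j(m) = \sum_{i=0}^{j-1} v_i\, 3^{\,s_i}/2^{\,j-i}$ depends only on the parity vector, so it suffices to compare the values of this functional at $V_j(m)$ and $V_j(n)$.

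By transitivity of $\prec$ it is enough to treat the covering relation, so I would assume $V_j(m)$ and $V_j(n)$ differ exactly at two adjacent positions $p$ and $p+1$, with $V_j(m)$ carrying $01$ and $V_j(n)$ carrying $10$ there, while agreeing on the common prefix (positions $<p$) and suffix (positions $>p+1$). The main step is to show that the defining sum changes term-by-term only at the indices $p$ and $p+1$. For suffix indices $i>p+1$ this is immediate, since both $v_i$ and $s_i$ involve only fixed positions. The crucial observation is that for every prefix index $i<p$ the weight $s_i$ is \emph{also} unchanged: $s_i$ counts the $1$'s strictly to the right of $i$, and since each of $01$ and $10$ contributes exactly one $1$ while the suffix is fixed, this count is identical for the two vectors. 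Thus all prefix and suffix terms coincide.

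It then remains to compute the contribution of positions $p$ and $p+1$. Writing $S=\sum_{k>p+1}v_k$ for the number of $1$'s in the suffix (common to both vectors), the $01$-configuration contributes $3^{S}/2^{\,j-p-1}$ (position $p+1$ alone, as position $p$ is a $0$), whereas the $10$-configuration contributes $3^{S}/2^{\,j-p}$ (position $p$ alone). Their difference is
\[
E_j(m) - E_j(n) = \frac{3^{S}}{2^{\,j-p-1}} - \frac{3^{S}}{2^{\,j-p}} = \frac{3^{S}}{2^{\,j-p}} > 0,
\]
establishing the strict inequality for a single cover. Chaining this over a chain of covers $V_j(m)=V^{(0)} \prec V^{(1)} \prec \cdots \prec V^{(r)}=V_j(n)$ yields $E_j(m) > E_j(n)$ in the general case.

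The only delicate point, and the one I would emphasize, is precisely this invariance of the exponents $s_i$ on the prefix: moving a single $1$ one step to the left within an otherwise fixed word preserves the number of $1$'s lying to the right of any earlier position, so the weights $3^{s_i}$ are untouched and the whole comparison collapses to the two affected indices. Once that is seen, the sign is forced by the extra factor of $2$ appearing in the denominator when the $1$ moves left, and I do not expect any genuine obstacle beyond careful bookkeeping of the indices.
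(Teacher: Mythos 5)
Your proof is correct, and it shares the paper's overall skeleton: reduce to a single adjacent transposition (the covering relation $\left\langle w_1\,01\,w_2\right\rangle \prec \left\langle w_1\,10\,w_2\right\rangle$) and then chain strict inequalities by transitivity. The execution of the covering step, however, is genuinely different. The paper never writes down a closed form for $E_j$: it applies the two-step remainder recursion across the swapped positions, getting $E_{k_1+2}(m)=\frac34 E_{k_1}(m)+\frac12$ versus $E_{k_1+2}(n)=\frac34 E_{k_1}(n)+\frac14$ with equal values after the common prefix $w_1$, and then propagates the strict inequality through the common suffix $w_2$, using that each further iteration applies the same increasing affine map ($E\mapsto \frac12 E$ or $E\mapsto \frac32 E+\frac12$) to both remainders. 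You instead derive the explicit weighted-sum formula $E_j=\sum_{i} v_i\,3^{s_i}/2^{j-i}$, show that every term outside the swapped pair is unchanged, and compute the difference exactly. There is a pleasant duality between the two arguments: in the paper's recursion the prefix is trivial (equal remainders) and the suffix requires the propagation argument, whereas in your sum the suffix is trivial and the prefix requires your key observation that the exponents $s_i$ are invariant because $01$ and $10$ contribute the same number of ones to the right of any earlier position. Your route is a bit longer, since you must first establish the closed form and its dependence on the parity vector alone, but it is more informative: it produces the exact value $3^{S}/2^{\,j-p}$ of the gap created by a single cover, while the paper's monotone propagation yields only its sign.
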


\begin{proof}
First we consider the case in which $V_j(m)$ and $V_j(n)$ can be written as $V_j(m)=\left\langle  w_{1} \, 0 1 \, w_{2} \right\rangle$ and $V_j(n)=\left\langle  w_{1} \, 1 0 \, w_{2} \right\rangle$ where $w_{1}$ and $w_{2}$ are two binary words of length $k_1$ and $k_2$, respectively. We have
$$E_{k_1+2}(m) = \frac34 E_{k_1}(m) + \frac12$$
and
$$E_{k_1+2}(n) = \frac34 E_{k_1}(n) + \frac14$$
with $E_{k_1}(m) = E_{k_1}(n) $ since $V_{k_1}(m) = \left\langle  w_{1} \right\rangle = V_{k_1}(n)$. Thus, we obtain that $E_{k_1+2}(m) > E_{k_1+2}(n)$, which in turn implies that $E_j(m) > E_j(n)$ as a result of the perfect identity between the respective parity vectors of $m$ and $n$ on the last $k_2$ iterations.

The general case easily follows by applying the transitivity of the usual order on real numbers.
\end{proof}

Recall that two positive integers have same parity vector of length $j$ if and only if they belong to the same congruence class modulo $2^j$. 
This remarkable property was a key element to establish that almost all positive integers have a finite stopping time (Definition \ref{def:st}) \cite{Eve77,Lag85,Ter76}. Moreover, this property is helpful to determine the set of positive integers $n$ that minimize or maximize the remainders $E_j(n)$.

\begin{theorem}\label{th:min_max_Ej}
For every positive integers $j$ and $n$,
\begin{equation}\label{eq:min_max_Ej}
\frac{3^q - 2^q}{2^j} \leq E_j(n) \leq \frac{3^q - 2^q}{2^q}.
\end{equation}
Moreover, the upper bound is reached if and only if $V_j(n) = \left\langle  0^{j-q} \, 1^{q} \right\rangle$, in which case $n \equiv -2^{j-q} \pmod{2^j}$.

Similarly, the lower bound is reached if and only if  $V_j(n) = \left\langle  1^{q} \, 0^{j-q} \right\rangle$, in which case $n \equiv \left( \frac23 \right)^q - 1  \pmod{2^j}$.
\end{theorem}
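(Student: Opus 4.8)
The plan is to reduce the inequality \eqref{eq:min_max_Ej} to identifying the extremal elements of the poset $(\prec)$ restricted to parity vectors of a fixed length $j$ with a fixed number of ones $q$, and then to evaluate $E_j$ explicitly at those extremes. First I would fix $j$ and $q$ and argue, using the componentwise characterisation (i)--(ii), that among all parity vectors of length $j$ with exactly $q$ ones the word $\left\langle 1^{q} \, 0^{j-q} \right\rangle$ is the unique maximum and $\left\langle 0^{j-q} \, 1^{q} \right\rangle$ the unique minimum: the partial sums $\sum_{i=0}^{k} v_i$ are bounded above by $\min(k+1,q)$, with equality throughout only for $\left\langle 1^{q} \, 0^{j-q} \right\rangle$, and below by $\max(0,\,k+1-(j-q))$, with equality throughout only for $\left\langle 0^{j-q} \, 1^{q} \right\rangle$. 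Since every parity vector is realised by some integer modulo $2^{j}$, Lemma \ref{lem:poset_Ej} transfers this to $E_j$: as $E_j$ strictly decreases along $\prec$, it attains its maximum at $\left\langle 0^{j-q} \, 1^{q} \right\rangle$ and its minimum at $\left\langle 1^{q} \, 0^{j-q} \right\rangle$, and both are attained uniquely. This simultaneously yields the two-sided bound and the stated equality cases.

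Next I would compute the two extremal values from the one-step recursion governing the remainder, namely $E_{k+1} = \tfrac12 E_k$ when $T^{k}(n)$ is even and $E_{k+1} = \tfrac12(3E_k+1)$ when it is odd, with $E_0 = 0$ (this specialises the identities used in the proof of Lemma \ref{lem:poset_Ej}). For $\left\langle 0^{j-q} \, 1^{q} \right\rangle$ the first $j-q$ halvings keep the remainder at $0$, after which iterating $E \mapsto \tfrac12(3E+1)$ exactly $q$ times, whose closed form follows from the fixed point $-1$, gives $\left(\tfrac32\right)^{q} - 1 = \tfrac{3^{q}-2^{q}}{2^{q}}$, the upper bound. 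For $\left\langle 1^{q} \, 0^{j-q} \right\rangle$ the same $q$ odd steps first produce $\tfrac{3^{q}-2^{q}}{2^{q}}$, and the subsequent $j-q$ halvings divide this by $2^{j-q}$, yielding $\tfrac{3^{q}-2^{q}}{2^{j}}$, the lower bound.

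Finally, for the congruences I would exploit that $T^{j}(n)$ is an integer while $2^{j} E_j$ is also an integer (immediate from the recursion), so that multiplying \eqref{eq:linear} by $2^{j}$ forces $3^{q} n \equiv -2^{j} E_j \pmod{2^{j}}$. As $3$ is invertible modulo $2^{j}$, this determines $n$ uniquely modulo $2^{j}$; substituting $2^{j}E_j = 3^{q}-2^{q}$ for the lower-bound vector gives $n \equiv 3^{-q}2^{q}-1 \equiv (2/3)^{q}-1$, while $2^{j}E_j = 2^{j-q}(3^{q}-2^{q})$ for the upper-bound vector gives $n \equiv -2^{j-q} + 2^{j}3^{-q} \equiv -2^{j-q} \pmod{2^{j}}$, since $2^{j}3^{-q} \equiv 0$. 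The step requiring most care is the verification that these two words are genuinely the unique poset extremes with $q$ ones, since that is what forces the equality cases rather than the mere inequalities; once it is secured, the recursion and the congruence computations are routine.
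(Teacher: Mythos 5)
Your proposal is correct and follows essentially the same route as the paper: sandwich every parity vector of length $j$ with $q$ ones between $\langle 0^{j-q}\,1^{q}\rangle$ and $\langle 1^{q}\,0^{j-q}\rangle$ in the partial order, apply Lemma \ref{lem:poset_Ej} to obtain both bounds together with their strict equality cases, and evaluate $E_j$ at the two extremal vectors via the geometric-sum recursion. The only minor deviation is the congruence step, where the paper verifies directly by iterating $T$ that integers $\equiv -2^{j-q}$ and $\equiv \left(\frac23\right)^{q}-1 \pmod{2^j}$ realize the extremal parity vectors, while you recover the same classes by inverting $3^q n \equiv -2^j E_j \pmod{2^j}$; both arguments are routine and equivalent.
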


\begin{proof}
Let $j$ and $n$ be two positive integers. Consider the vectors $$V_{\rm min} = \left\langle  0^{j-q} \, 1^{q} \right\rangle \quad \text{and} \quad V_{\rm max} = \left\langle  1^{q} \, 0^{j-q} \right\rangle$$ where $q = q_j(n)$ as in Definition \ref{def:q_E_Vj}.

Clearly, $V_{\rm min}$ precedes any parity vector $V$ of length $j$ and ones-ratio $\frac{q}{j}$ with respect to the partial order $\preceq$, and any such parity vector $V$ precedes $V_{\rm max}$. Thus,
\begin{equation} \label{eq:V_n}
V_{\rm min} \preceq V_j(n) \preceq  V_{\rm max}.
\end{equation}

Next, consider two positive integers $n_{\rm min}$ and $n_{\rm max}$ such that 
$$ n_{\rm min} \equiv -2^{j-q} \pmod{2^j}$$
and
$$ n_{\rm max} \equiv \left( \frac23 \right)^q - 1  \pmod{2^j}.$$
By iterating the function $T$, $j$ times, from the starting values $n_{\rm min}$ and $n_{\rm max}$, it is straightforward to verify that $V_j\left( n_{\rm min} \right) = V_{\rm min}$ and $V_j\left( n_{\rm max} \right) = V_{\rm max}$.

Now, we estimate the respective remainders of $n_{\rm min}$ and $n_{\rm max}$ when $q \geq 1$:
\begin{align}
\label{eq:E_min}
E_j\left( n_{\rm min} \right) &= \frac12 \sum_{k=0}^{q-1} \left( \frac32 \right)^{k} = \left( \frac32 \right)^{q} - 1, \\
\label{eq:E_max}
E_j\left( n_{\rm max} \right) &= \frac{1}{2^{j-q}} \cdot \frac12 \sum_{k=0}^{q-1} \left( \frac32 \right)^{k} = \frac{3^q - 2^q}{2^j}.
\end{align}
From \eqref{eq:V_n}, \eqref{eq:E_min} and \eqref{eq:E_max}, we obtain both inequalities in \eqref{eq:min_max_Ej} by applying Lemma \ref{lem:poset_Ej}. Moreover, the set of integers for which the upper and lower bounds are reached are the congruence classes modulo $2^j$ of $n_{\rm min}$ and $n_{\rm max}$, respectively.

For completeness, the case $q=0$ is easy to verify as it results in $V_j(n) = \left\langle  0^{j} \right\rangle$ and $n \equiv 0 \pmod{2^j}$, so $E_j(n) = 0$.
\end{proof}

According to Theorem \ref{th:min_max_Ej}, large values of the remainder tend to occur for sequences ending with many odd terms. Therefore, one should expect most paradoxical sequences to end with an increasing trend. This may explain why finding a counterexample to the CST conjecture is difficult, if not impossible.

Another observation is that the maximal possible value of the remainder depends only on $q$ and grows exponentially fast as $q$ increases. For all sequences of $j$ iterations of the function $T$, its value is maximal when $q=j$, in which case it is close to $\left( \frac32 \right)^j$ when $j$ is large. Our next result shows that its average value grows much more slowly.

\begin{lemma}\label{lem:mean_Ej}
For every positive integer $j$, the arithmetic mean of $\left\lbrace E_j(n) \right\rbrace_{n=1}^{2^j}$ is equal to $\frac{j}{4}$.
\end{lemma}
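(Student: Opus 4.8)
The plan is to set up a recursion on $j$ for the total sum $S_j := \sum_{n=1}^{2^j} E_j(n)$, prove by induction that $S_j = j\,2^{j-2}$, and then read off the claimed mean $S_j / 2^j = j/4$.

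First I would record the one-step recursion for the remainder. Starting from the defining relation $T^{j}(n) = C_j(n)\,n + E_j(n)$ and applying $T$ one more time, the parity of the iterate $T^{j}(n)$ — which is precisely the $(j{+}1)$-th entry of the parity vector $V_{j+1}(n)$ — yields
$$E_{j+1}(n) = \begin{cases} \dfrac{E_j(n)}{2} & \text{if } T^{j}(n) \text{ is even},\\ \dfrac{3E_j(n)+1}{2} & \text{if } T^{j}(n) \text{ is odd}.\end{cases}$$
This is the single-step analogue of the two-step identities already used in the proof of Lemma~\ref{lem:poset_Ej}.

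The structural input is the period-$2^j$ property recalled just above: as $n$ runs through a complete residue system modulo $2^{j+1}$ (e.g.\ $n=1,\ldots,2^{j+1}$), the vectors $V_{j+1}(n)$ take each of the $2^{j+1}$ binary strings of length $j+1$ exactly once. Hence every length-$j$ prefix $v$ arises for exactly two values of $n$ modulo $2^{j+1}$: one with $T^{j}(n)$ even and one with $T^{j}(n)$ odd. Since $E_j(n)$ depends only on this prefix, it takes the common value $E_j(v)$ on both. Pairing the two contributions via the one-step recursion, each pair contributes $\tfrac{1}{2}E_j(v) + \tfrac{3E_j(v)+1}{2} = 2E_j(v) + \tfrac12$; summing over the $2^j$ prefixes (whose $E_j$-values reproduce $S_j$) gives the clean recursion $S_{j+1} = 2S_j + 2^{j-1}$.

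Finally I would treat the base case $S_1 = E_1(1) + E_1(2) = \tfrac12 + 0 = 1\cdot 2^{-1}$ and verify that $S_j = j\,2^{j-2}$ solves the recursion, completing the induction. I do not expect any computation to be the obstacle; the delicate point is instead the balancing step, namely that over each fixed length-$j$ prefix the final parity bit splits evenly between $0$ and $1$. This is the hinge on which the symmetric recursion depends, for otherwise the two branches of the one-step formula could carry unequal weights. Fortunately it is immediate from the bijection between residue classes modulo $2^{j+1}$ and parity vectors of length $j+1$, so the argument reduces to the elementary induction described above.
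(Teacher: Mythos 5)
Your proposal is correct and is essentially the paper's own proof: the same induction on $j$, driven by the same one-step recursion $E_{j+1} = \tfrac{E_j}{2}$ or $\tfrac{3E_j+1}{2}$ and the same key observation that, under the bijection between residues modulo $2^{j+1}$ and parity vectors of length $j+1$, the final parity bit splits evenly over each length-$j$ prefix. The only cosmetic difference is that you track the sum $S_j = j\,2^{j-2}$ while the paper tracks the mean $\mu_j = j/4$.
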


\begin{proof}
First, recall that every parity vector of length $j$ occurs for exactly one positive integer $n \leq 2^{j}$, as was first shown by Everett \cite{Eve77} and Terras \cite{Ter76}. We then proceed by induction on $j$ and call $\mu_j$ the mean value of $E_j(n)$ for $1 \leq n \leq 2^j$.

Starting the induction at $j=1$ yields the values $E_1(1) = \frac12$ and $E_1(2)=0$ of mean $\mu_1 = \frac14$.

Next, assume $\mu_j = \frac{j}{4}$. Among all parity vectors of length $j+1$, half ends with the value 1 and the other half ends with the value 0. Thus, the arithmetic mean of the remainders corresponding to the first half is equal to $\frac{3 \mu_j + 1}{2}$, whereas it is equal to $\frac{\mu_j}{2}$ for the other half. It yields that
$$ \mu_{j+1}= \frac12 \left( \frac{3 \mu_j + 1}{2} \right) + \frac12 \left( \frac{\mu_j}{2} \right) = \mu_j + \frac14 = \frac{j+1}{4}.$$
\end{proof}

\begin{remark}
Since the function $E_j$ has period $2^j$, its mean value on $\mathbb{Z}_{\geq 1}$ is $\frac{j}{4}$, according to Lemma \ref{lem:mean_Ej}.
\end{remark}

Much work remains to be done to better specify the distribution of the remainder $E$ with respect to the parameters $j$ and $q$. Fortunately, such detailed knowledge is not necessary for the present study.


\section{From divergent to paradoxical sequences}
\label{sec:div_par}

In this section, and throughout the paper, it is important to make clear that finite sequences that differ in their first term or length are considered as distinct, no matter how many terms they have in common.

As is easily seen, a counterexample to the Collatz conjecture is either a nontrivial cycle or a sequence diverging to infinity. The first case is obviously a paradoxical sequence whose first and last terms coincide. In fact, both cases lead to paradoxical sequences, as is demonstrated by the next theorem. Our proof makes use of a lemma which directly follows from the well-developed theory of continued fractions and is closely related to Dirichlet's approximation theorem.

\begin{lemma}\label{lem:approx}
For every $\varepsilon > 0$, there are infinitely many pairs $(a,b)$ of positive integers such that
\begin{equation}\label{eq:frac_powers}
 1 - \varepsilon < \frac{3^a}{2^b} < 1.
\end{equation}
\end{lemma}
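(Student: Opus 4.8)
The plan is to take logarithms and translate the double inequality into a statement about how closely the multiples of the irrational number $\alpha = \frac{\log 3}{\log 2}$ can approach an integer from below. Writing the conditions in \eqref{eq:frac_powers} logarithmically, $\frac{3^a}{2^b} < 1$ is equivalent to $a\alpha < b$, while $\frac{3^a}{2^b} > 1 - \varepsilon$ is equivalent to $a\alpha - b > \frac{\log(1-\varepsilon)}{\log 2}$. Setting $\delta = -\frac{\log(1-\varepsilon)}{\log 2} > 0$, both conditions hold precisely when $a\alpha - b \in (-\delta, 0)$.

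The natural choice is $b = \lceil a\alpha \rceil$, which forces $a\alpha - b = \{a\alpha\} - 1$, where $\{\cdot\}$ denotes the fractional part; this is well-defined and lies in $(0,1)$ since $\alpha$ is irrational, so $a\alpha$ is never an integer. The target then becomes the single requirement $\{a\alpha\} \in (1 - \delta, 1)$. Hence it suffices to exhibit infinitely many positive integers $a$ whose product $a\alpha$ has fractional part in the subinterval $(1-\delta,1)$, which has positive length $\delta$.

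First I would record that $\alpha = \frac{\log 3}{\log 2}$ is irrational: were $\alpha = p/q$ with positive integers $p,q$, we would obtain $2^p = 3^q$, contradicting unique factorization. Then I invoke the classical fact that for irrational $\alpha$ the sequence $\left( \{a\alpha\} \right)_{a \geq 1}$ is dense in $[0,1)$. Mere density suffices and can be obtained by a pigeonhole argument: among $\{\alpha\},\ldots,\{(N+1)\alpha\}$ two fall within $\frac1N$ of each other, so some $\{(j-i)\alpha\}$ is within $\frac1N$ of $0$ or $1$, and the integer multiples of that small quantity then partition $[0,1)$ into gaps shorter than $\delta$, reaching every subinterval of length $\delta$. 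Density guarantees not one but infinitely many $a$ with $\{a\alpha\} \in (1-\delta,1)$, and for each such $a \geq 1$ the companion $b = \lceil a\alpha \rceil \geq 1$ is a positive integer, yielding the desired infinitude of pairs $(a,b)$.

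The argument involves no genuine obstacle; the only point requiring care is the bookkeeping matching the open interval $(1-\delta,1)$ for $\{a\alpha\}$ to the inequalities in \eqref{eq:frac_powers}, together with the observation that the strictness of both inequalities is automatic, since irrationality of $\alpha$ prevents the boundary values $\frac{3^a}{2^b} = 1$ and $\frac{3^a}{2^b} = 1-\varepsilon$ from ever being attained. Everything else reduces to the density of $\left( \{a\alpha\} \right)$, which is entirely standard.
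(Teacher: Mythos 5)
Your argument is correct, and it reaches the conclusion by a genuinely different route than the paper. The paper works with $x = \frac{\log 2}{\log 3}$ and its continued fraction expansion: the even-indexed convergents $\frac{p_n}{q_n}$ approach $x$ from below with error less than $\frac{1}{q_n^2}$, so the pairs $(a,b) = (p_n,q_n)$ with $q_n > \frac{\log 3}{-\log(1-\varepsilon)}$ satisfy \eqref{eq:frac_powers} outright. You instead pass to $\alpha = \frac{\log 3}{\log 2}$, set $b = \lceil a\alpha \rceil$, translate \eqref{eq:frac_powers} into $\{a\alpha\} \in (1-\delta,1)$ with $\delta = \frac{-\log(1-\varepsilon)}{\log 2}$, and invoke density of the fractional parts $\{a\alpha\}$, $a \geq 1$, in $[0,1)$, which you prove by pigeonhole. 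Both proofs rest on the irrationality of $\frac{\log 3}{\log 2}$ together with one-sided rational approximation; yours is more elementary and self-contained (no continued fraction theory needed), while the paper's is quantitative --- it exhibits explicit pairs and controls how large $b$ must be --- which costs nothing for this particular lemma but matches the effective flavor of the estimates used elsewhere in the paper (e.g.\ Rhin's theorem in Section \ref{sec:heuristic}).

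Two pieces of fine print. First, like the paper, you should state at the outset that one may assume $0 < \varepsilon < 1$ (otherwise $\log(1-\varepsilon)$ is undefined); the case $\varepsilon \geq 1$ follows a fortiori from, say, $\varepsilon = \frac12$. Second, your closing claim that irrationality of $\alpha$ prevents $\frac{3^a}{2^b} = 1-\varepsilon$ is wrong as stated: nothing stops $1-\varepsilon$ from being exactly of the form $\frac{3^a}{2^b}$ (take $\varepsilon = \frac14$). The slip is harmless, because targeting the \emph{open} interval $(1-\delta,1)$ for $\{a\alpha\}$ already yields both strict inequalities; the strictness should be credited to that choice, not to irrationality. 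Finally, the passage from density to ``infinitely many $a$'' deserves its one-line justification: if only finitely many terms $\{a\alpha\}$ lay in $(1-\delta,1)$, then some nonempty open subinterval avoiding those finitely many points would contain no term of the sequence at all, contradicting density.
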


\begin{proof}
Let $\varepsilon$ be a real that verifies $0 < \varepsilon < 1$. Setting $x = \frac{\log 2}{\log 3}$, inequalities in \eqref{eq:frac_powers} are equivalent to
$$ 0 < x - \frac{a}{b}  < \frac{-\log(1-\varepsilon)}{b \log 3}.$$ 
Since $x$ is irrational, its continued fraction expansion is infinite and leads to a sequence of rational convergents $\frac{A_n}{B_n}$ with $n \geq 0$ starting as follows:
$$ 0, \; 1, \; \frac12, \; \frac23, \; \frac58, \; \frac{12}{19}, \; \frac{41}{65}, \ldots.$$
According to the theory of continued fractions,
$$ 0 < x - \frac{A_n}{B_n} < \frac{1}{B_n^{2}} \quad \text{whenever $n$ is even.}$$
We see that all pairs $(a,b)$ with $a = A_n$ and $b = B_n$ satisfy $\eqref{eq:frac_powers}$ provided that $n$ is a sufficiently large even integer. This observation is sufficient to conclude, although we have only identified a small subset of all solutions to $\eqref{eq:frac_powers}$.
\end{proof}

\begin{theorem}\label{th:div2par}
If an integer $n \geq 2$ has an infinite stopping time, then there exist infinitely many paradoxical sequences starting from integers of the form $2^k n$ with $k$ a nonnegative integer.
\end{theorem}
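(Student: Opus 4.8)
The plan is to exploit the fact that prepending even terms to a trajectory of $n$ lowers the coefficient without touching the last term. Concretely, for $k \ge 1$ the integer $2^k n$ satisfies $T^i(2^k n) = 2^{k-i} n$ for $0 \le i \le k$, so the sequence of length $k+j$ starting from $2^k n$ consists of $k$ halvings followed by the first $j$ iterates of $n$; its last term is $T^j(n)$ and its coefficient is $C_j(n)/2^k$. Hence I would first record the elementary reduction: the sequence $2^k n, T(2^k n), \ldots, T^{k+j}(2^k n)$ is paradoxical if and only if
\begin{equation*}
C_j(n) < 2^k \le \frac{T^j(n)}{n},
\end{equation*}
that is, if and only if $2^k$ lies in the half-open interval $\left( C_j(n),\, T^j(n)/n \right]$. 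Since $n$ has infinite stopping time, $T^j(n) \ge n$ for every $j$, so this interval always reaches up to at least $1$, and the whole problem reduces to producing infinitely many $j$ for which the interval contains a power of $2$.

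I would then split on the behaviour of $C_j(n)$. If $C_j(n) < 1$ for infinitely many $j$ (which happens, for instance, whenever the trajectory of $n$ stays bounded, since then it is eventually periodic and $C_j(n) \to 0$), each such $j$ already works with $k = 0$: indeed $2^0 = 1$ lies in $\left( C_j(n),\, T^j(n)/n \right]$ because $C_j(n) < 1 \le T^j(n)/n$. This yields infinitely many paradoxical sequences of pairwise distinct lengths, all starting from $n$ itself.

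The remaining, and main, case is when $C_j(n) \ge 1$ for all large $j$. Here I would control the width of the interval through the ratio $r_j := E_j(n)/\!\left(C_j(n)\, n\right)$, for which $T^j(n)/n = C_j(n)(1 + r_j)$. A one-step computation using the recursions $C \mapsto \tfrac32 C,\ E \mapsto \tfrac32 E + \tfrac12$ (odd step) and $C \mapsto \tfrac12 C,\ E \mapsto \tfrac12 E$ (even step) shows that $r_j$ is non-decreasing in $j$ and strictly increases at every odd iterate; since $C_j(n) \ge 1$ forces the presence of odd terms, $r_j \ge c_0$ for some fixed $c_0 > 0$ and all large $j$, so the interval has multiplicative width $1 + r_j \ge 1 + c_0$ bounded away from $1$. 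Finally, because $\log_2 C_j(n) = q_j(n)\log_2 3 - j$, its fractional part equals $\{q_j(n)\log_2 3\}$, and the interval contains a power of $2$ precisely when this fractional part is close enough to $1$, namely $\{q_j(n)\log_2 3\} \ge 1 - \log_2(1 + c_0)$. As $q_j(n)$ runs through all positive integers, Lemma \ref{lem:approx} (with $\varepsilon = 1 - (1+c_0)^{-1}$) furnishes infinitely many values $a = q_j(n)$ for which the smallest power of $2$ exceeding $3^{q_j(n)}$ falls below $3^{q_j(n)}(1+r_j)$; dividing by $2^j$ places that power of $2$ in $\left( C_j(n),\, T^j(n)/n \right]$, and since $C_j(n) \ge 1$ the corresponding exponent $k$ is a positive integer.

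I expect the main obstacle to be exactly this last case: when the trajectory diverges the interval no longer widens, so one cannot simply invoke a large remainder, and everything hinges on showing that the positions $\{q_j(n)\log_2 3\}$ recurrently approach $1$ — which is precisely the Diophantine input packaged in Lemma \ref{lem:approx}. A secondary technical point to check carefully is that for each admissible $a = q_j(n)$ produced by the lemma one may select an index $j$ large enough that both $C_j(n) \ge 1$ and $r_j \ge c_0$ hold, so that $k \ge 0$ and the resulting lengths $k+j$ tend to infinity, guaranteeing that the sequences obtained are genuinely distinct.
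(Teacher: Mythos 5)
Your proof is correct, and while it shares the paper's two pillars---Lemma \ref{lem:approx} as the Diophantine input, and starting integers $2^k n$ obtained by prepending $k$ halvings---the mechanism by which you certify $T^{k+j}(2^k n) \geq 2^k n$ is genuinely different. The paper fixes $\varepsilon = \frac{1}{4n}$ once and for all, takes $j$ to be the first index with $q_j(n)=a$, splits on $j \geq b$ versus $j<b$, and in the latter case verifies positivity of $T^b(2^{b-j}n)-2^{b-j}n$ via the minimum-remainder bound $E_j(n) \geq (3^a-2^a)/2^j$ of Theorem \ref{th:min_max_Ej}, i.e., via the partial-order machinery of Section \ref{sec:poset}. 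You instead split on whether $C_j(n)<1$ happens infinitely often and, in the complementary case, exploit the monotonicity of $r_j = E_j(n)/\left(C_j(n)\,n\right)$ (unchanged by even steps, strictly increased by odd steps), which gives the target interval $\left( C_j(n), T^j(n)/n \right]$ a multiplicative width bounded below by $1+c_0$; feeding $\varepsilon = c_0/(1+c_0)$ into Lemma \ref{lem:approx} then supplies the required powers of $2$. Your route is self-contained---it bypasses Theorem \ref{th:min_max_Ej} and the poset of Section \ref{sec:poset} entirely---and the reduction of paradoxicality of $\Omega_{k+j}(2^k n)$ to the condition $2^k \in \left( C_j(n), T^j(n)/n \right]$ is a clean organizing device; the paper's version buys an explicit, uniform choice of $\varepsilon$ and a concrete lower bound $T^b(m)-m > (2^{b-1}-2^a)/2^j$ on the excess. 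The technical point you flagged does close: in your second case $q_j(n) \to \infty$ (otherwise $C_j(n) \to 0$), so $q_j(n)$ attains every sufficiently large integer since it increments by at most $1$; and for all sufficiently large $a$, every index $j$ with $q_j(n)=a$ is itself large, hence automatically satisfies both $C_j(n) \geq 1$ and $r_j \geq c_0$, so that $k=b-j \geq 1$ and the resulting lengths $b$ tend to infinity, making the sequences pairwise distinct.
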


\begin{proof}
Assume $n \geq 2$ to be such that $T^j(n) \geq n$ for all $j$. 
We consider the set 
$$ S = \left\lbrace (a,b) \in \mathbb{Z}_{\geq 1}^{2} : 1 - \frac{1}{4n} < \frac{3^a}{2^b} < 1 \right\rbrace$$
which is infinite by Lemma \ref{lem:approx}. For an given pair $(a,b)$ in $S$, let $j$ be the least number of iterations for which $q_j(n)=a$. 

In the case $j \geq b$, we immediately obtain a paradoxical sequence starting at $n$ and ending at $T^j(n)$ with coefficient $C_j(n) = \frac{3^a}{2^j} \leq \frac{3^a}{2^b} < 1$. If this case occurs, then it yields a counterexample to the CST conjecture of Terras since $n$ has coefficient stopping time $\tau(n) \leq j$.

To address the other case $j < b$, we set $k = b-j$ and $m = 2^k \, n$, so that
$$T^b(m) - m  = \left( \frac{3^a}{2^b} - 1 \right) m + E_b(m)$$
since $T^k(m) = n$ and $V_k(m) = \left\langle  0^k \right\rangle$. For each term on the right-hand side, we have the respective lower bounds
$$\left( \frac{3^a}{2^b} - 1 \right) m > -\frac{m}{4n} = -2^{k-2} $$
and, by Theorem \ref{th:min_max_Ej},
\begin{align*}
E_b(m) = E_j(n) &\geq \frac{3^a - 2^a}{2^j} \\
 &> \frac{2^b}{2^j} \left( 1 - \frac{1}{4n} \right) - \frac{2^a}{2^j} \quad \quad \text{since $(a,b) \in S$} \\
 &\geq \frac{2^b}{2^j} \left( 1 - \frac14 \right) - \frac{2^a}{2^j} 
 = \frac{3 \cdot 2^{b-2} - 2^a}{2^j} \quad \text{as $n \geq 1$}.
\end{align*}
Combining the lower bounds yields
$$T^b(m) - m >  \frac{ 2^{b-1} - 2^a}{2^j} \geq 0$$
where the last inequality results from $b \geq a+1$. Thus, the sequence $m,\ldots,T^b(m)$ is paradoxical.

To conclude, the proof follows from the fact that we can find a sequence of the desired form for every pair $(a,b)$ in $S$ and that these sequences are all distinct, although they largely overlap.
\end{proof}

In Theorem \ref{th:div2par}, the starting integer $n$ could be the smallest element of a  cycle. If this case occurs, then the resulting paradoxical sequences start from a finite set of integers and continuously repeat the same loop, hence they are periodic and differ mostly in their length. Since the premise of this theorem is unlikely to hold, its main interest lies in its contraposition.

\begin{corollary}\label{cor:par_col}
If there are only finitely many paradoxical sequences, then the Collatz conjecture is true.
\end{corollary}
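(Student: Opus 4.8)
The plan is to prove the contrapositive: assuming the Collatz conjecture is false, I would exhibit infinitely many paradoxical sequences with first term greater than $2$, thereby contradicting the finiteness hypothesis. The key observation is that a counterexample to the Collatz conjecture must be either a nontrivial cycle or a divergent trajectory, and in both situations one can locate an integer with infinite stopping time, which is exactly the hypothesis of Theorem \ref{th:div2par}.

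First I would argue that a failure of the Collatz conjecture produces an integer $n$ with $t(n) = +\infty$. If the counterexample is a divergent sequence, then its terms grow without bound, so there is some term $n_0$ below which the sequence never returns; any later term sufficiently large would then satisfy $T^j(n_0) \geq n_0$ for all $j \geq 1$, giving $t(n_0) = +\infty$. If the counterexample is a nontrivial cycle, I would take $n$ to be the smallest element of the cycle: since every other cyclic term exceeds $n$ and the sequence eventually returns to $n$ itself (never dropping strictly below it), we again have $T^j(n) \geq n$ for all $j \geq 1$, hence $t(n) = +\infty$. In either case I obtain an integer $n$ satisfying the hypothesis of Theorem \ref{th:div2par}.

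Next I would invoke Theorem \ref{th:div2par} directly: it guarantees infinitely many paradoxical sequences starting from integers of the form $2^k n$ with $k \geq 0$. Since these sequences are pairwise distinct (as emphasized in the excerpt, sequences differing in first term or length are counted separately) and their first terms $2^k n$ take infinitely many values, in particular infinitely many of them exceed $2$. This contradicts the assumption that only finitely many paradoxical sequences have first term greater than $2$. The contradiction establishes the corollary.

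The only subtle point — and the step I would verify most carefully — is ensuring the integer $n$ produced has infinite stopping time in the strict sense required, namely $T^j(n) \geq n$ for \emph{all} $j \geq 1$, rather than merely failing to descend eventually. For the divergent case this requires choosing $n$ to be a running minimum of the trajectory (a value the sequence never goes below again), which exists because a divergent nonnegative integer sequence attains its infimum over any tail. For the cyclic case the smallest cycle element works immediately. Once this integer is in hand, the corollary follows mechanically from the preceding theorem, so I expect no further obstacle.
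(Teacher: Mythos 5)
Your overall strategy is the same as the paper's: pass to the contrapositive, extract from a Collatz counterexample an integer $n$ with $t(n)=+\infty$ (the smallest term of the nontrivial cycle or of the divergent trajectory), and apply Theorem \ref{th:div2par}. That part is sound, including your care about taking a running minimum in the divergent case. However, there is a genuine gap in your final step. Theorem \ref{th:div2par} guarantees infinitely many \emph{distinct sequences} starting from integers of the form $2^k n$; it does not guarantee that their first terms take infinitely many values, since distinct sequences may share the same first term and differ only in length. Your assertion that ``their first terms $2^k n$ take infinitely many values'' is therefore unjustified, and it is in fact false in general: for $n=1$ (which does have infinite stopping time), the trajectory $1,2,1,2,\ldots$ reaches its $a$-th odd term after $j=2a-1$ iterations, so for all large $a$ the construction in the proof of Theorem \ref{th:div2par} falls into the case $j \geq b$ and produces infinitely many paradoxical sequences all starting at $1$ itself. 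So ``infinitely many sequences'' does not yield ``infinitely many first terms exceeding $2$'' by your route.

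This point is not cosmetic, because the threshold $2$ in the corollary is essential: $n=1$ and $n=2$ really do generate infinitely many paradoxical sequences, so the statement would be false without it, and any proof must engage with it. The correct (and easy) repair is exactly the one line the paper adds: the counterexample trajectory never contains $1$ or $2$, otherwise it would enter the trivial cycle $(1,2)$; hence its smallest term satisfies $n>2$, and therefore \emph{every} first term $2^k n \geq n > 2$. With that observation, all of the infinitely many sequences supplied by Theorem \ref{th:div2par} have first term greater than $2$, and the corollary follows.
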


\begin{proof}
Consider the contraposition and suppose the Collatz conjecture is false. Hence, there exists a Collatz sequence of infinite length that does not contain the integer 1. If $n$ is the smallest term of the sequence, then it has an infinite stopping time. Applying Theorem \ref{th:div2par} gives infinitely many paradoxical sequences starting from integers greater than or equal to $n$ with $n>2$.
\end{proof}


\section{Properties of paradoxical sequences}
\label{sec:prop_par}

Hereafter, for $x \in \mathbb{R}$, let $\lfloor x \rfloor$ and $\lceil x \rceil$ denote the floor and ceiling of $x$, respectively. For convenience, we also introduce the following notation for a sequence of iterates.

\begin{definition} \label{def:omega}
For integers $j \geq 0$ and $n \geq 1$, let $\,\Omega_j(n)$ denote the sequence of iterates $\left( T^{k}(n) \right)_{k=0}^j $ of length $j+1$. As previously, we write $q$ for the number $q_j(n)$ of odd terms in $\Omega_{j-1}(n)$ whenever $j \geq 1$.
\end{definition}

From the linear expression
\begin{equation}\label{eq:linear2}
T^{j}(n) = C_j(n) \; n + E_j(n)
\end{equation}
for all positive integers $j$ and $n$, it is easily seen that $\Omega_j(n)$ is paradoxical if and only if
\begin{equation} \label{eq:paradox}
 0 < 1-C \leq \frac{E}{n}
\end{equation} 
where $C$ and $E$ stand for $C_j(n)$ and $E_j(n)$, respectively. Informally, this criterion suggests that paradoxical sequences are more likely to occur when the two conditions below are met:
\begin{enumerate}[(i)]
\item the coefficient $C$ is slightly smaller than 1;
\item the ratio $\frac{E}{n}$ is not too small.
\end{enumerate}
Unfortunately, little is known about the ratio $\frac{E}{n}$ which can be larger than 1 in rare cases, when a small integer $n$ gives rise to a sequence with a rapidly increasing trend. E.g., when $n=27$ and $j=45$, we have $\frac{E}{n} = 12.96\ldots$.

According to the next result where the ratio $\frac{E}{n}$ is constrained from above, one should expect both inequalities in \eqref{eq:paradox} to be quite sharp. To obtain this upper bound, we follow the method applied by Eliahou to prove \cite[Theorem 2.1]{Eli93}.

\begin{theorem}\label{th:E_n}
If $j$ and $n$ are positive integers such that $\Omega_j(n)$ is paradoxical, then
\begin{equation}\label{eq:E_n}
1-C = \frac{2^j - 3^q}{2^j} \leq \frac{E}{n} \leq \frac{\left( 3 + h^{-1}\right)^q - 3^q}{2^j}
\end{equation}
where $q$ is the number of odd terms in $\Omega_{j-1}(n)$ and $h$ is their harmonic mean.
\end{theorem}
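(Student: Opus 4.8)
The plan is to treat the two inequalities separately. The left-hand inequality is immediate: the paradox criterion \eqref{eq:paradox} is exactly $0 < 1 - C \le E/n$, and $1 - C = (2^j - 3^q)/2^j$ follows from $C = C_j(n) = 3^q/2^j$. So the entire content lies in the upper bound, which I would obtain, following Eliahou, by encoding the trajectory through its odd iterates and applying the arithmetic-geometric mean inequality.

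First I would set up notation for the odd terms. Since $\Omega_j(n)$ is paradoxical we have $T^j(n) \ge n$ together with $C < 1$, which forces $q \ge 1$ (if $q = 0$ then every iterate is a halving and $T^j(n) = n/2^j < n$). I list the odd iterates among $n, T(n), \ldots, T^{j-1}(n)$ as $b_1, \ldots, b_q$ in order of occurrence, say $b_i = T^{k_i}(n)$ with $0 \le k_1 < \cdots < k_q \le j-1$, and set $b_{q+1} := T^j(n)$ and $k_{q+1} := j$. Writing $d_i := k_{i+1} - k_i \ge 1$, the definition of $T$ gives, for each $i$, the single relation $3 b_i + 1 = 2^{d_i} b_{i+1}$: one odd step produces $(3b_i+1)/2$ and the remaining $d_i - 1$ steps are halvings down to the next odd term (or to $T^j(n)$ when $i = q$). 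Note also $n = 2^{k_1} b_1$, since the first $k_1$ iterates of $n$ are halvings.

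The key step is a telescoping product identity. From $3 b_i + 1 = 2^{d_i} b_{i+1}$ I form
$$ P := \prod_{i=1}^{q} \frac{3 b_i + 1}{b_i} = \prod_{i=1}^{q} \frac{2^{d_i} b_{i+1}}{b_i} = 2^{\sum_{i=1}^q d_i}\,\frac{b_{q+1}}{b_1} = 2^{\,j - k_1}\,\frac{T^j(n)}{b_1}, $$
using $\sum_{i=1}^q d_i = k_{q+1} - k_1 = j - k_1$. Substituting $b_1 = n/2^{k_1}$ cancels the power $k_1$ and yields $P = 2^j\, T^j(n)/n$. Combined with \eqref{eq:linear2}, namely $T^j(n) = \frac{3^q}{2^j} n + E$, this gives the exact identity $P = 3^q + 2^j \frac{E}{n}$, that is $\frac{E}{n} = \frac{P - 3^q}{2^j}$. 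Since each factor equals $\frac{3b_i+1}{b_i} = 3 + \frac{1}{b_i}$, the arithmetic-geometric mean inequality bounds the product by the $q$-th power of the arithmetic mean of its factors,
$$ P = \prod_{i=1}^{q}\left(3 + \frac{1}{b_i}\right) \le \left( \frac{1}{q}\sum_{i=1}^{q}\left(3 + \frac{1}{b_i}\right) \right)^{q} = \left( 3 + h^{-1} \right)^{q}, $$
where $h^{-1} = \frac{1}{q}\sum_{i=1}^q b_i^{-1}$ is the reciprocal of the harmonic mean $h$ of the odd terms. Inserting this into $\frac{E}{n} = (P - 3^q)/2^j$ gives the claimed upper bound.

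The only genuinely delicate point is the bookkeeping behind the product identity: correctly matching the exponents $d_i$ with the gaps between odd iterates and checking that $\sum_i d_i = j - k_1$ together with $n = 2^{k_1} b_1$ conspire to eliminate $k_1$, so that the same identity $P = 2^j T^j(n)/n$ holds whether $n$ is odd or even. Once that identity is in hand, the conversion to $E/n$ is routine and the AM--GM step is immediate; the appearance of the harmonic mean is then forced by which symmetric mean AM--GM produces. I would double-check the boundary block ending at $T^j(n)$ and reconfirm $q \ge 1$ so that $h$ is well defined, but no further obstacle is expected.
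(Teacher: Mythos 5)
Your proof is correct and takes essentially the same route as the paper: both reduce the upper bound to the telescoping identity $2^j\,T^j(n)/n = \prod_i\bigl(3 + 1/b_i\bigr)$ over the odd iterates (the paper writes it in the normalized form $1 + \tfrac{E}{Cn} = \prod_k\bigl(1+\tfrac{1}{3m_k}\bigr)$, which is your identity divided by $3^q$) and then apply the AM--GM inequality to obtain the harmonic-mean bound $\left(3+h^{-1}\right)^q$. The only cosmetic difference is that the paper telescopes over all $j$ consecutive ratios $T^{k+1}(n)/T^k(n)$ rather than grouping the trajectory into odd-to-odd blocks as you do.
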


\begin{proof}
On the one hand, the first inequality in \eqref{eq:E_n} follows from
$$ 1 \leq \frac{T^j(n)}{n} = \frac{3^q}{2^j} + \frac{E}{n} $$
when assuming $\Omega_j(n)$ paradoxical.

On the other hand, when dividing by the product of $C$ and $n$, formula \eqref{eq:linear2} becomes
$$ 1 + \frac{E}{C\,n} = \frac{T^j(n)}{C\,n} = \frac{2^j}{3^q} \, \prod_{k=0}^{j-1} \frac{T^{k+1}(n)}{T^{k}(n)}.$$
If $m_0, \ldots, m_{q-1}$ are the odd terms in $\Omega_{j-1}(n)$, then we can write
$$ 1 + \frac{E}{C\,n} = \prod_{k=0}^{q-1}\left( 1 + \frac{1}{3m_k}\right) \leq \left( 1 +  \frac{1}{3h} \right)^q $$
since the geometric mean of $1 + \frac{1}{3m_0}$, \ldots, $1 + \frac{1}{3m_{q-1}}$ is upper bounded by their arithmetic mean, which turns out to be $1 +  \frac{1}{3h}$. The second inequality in \eqref{eq:E_n} follows.
\end{proof}

As can be seen in the proof of Theorem \ref{th:E_n}, the second inequality in \eqref{eq:E_n} holds for all sequences $\Omega_j(n)$, no matter if they are paradoxical. It is far from obvious that the lower bound of the ratio $\frac{E}{n}$ in \eqref{eq:E_n} is smaller than its upper bound, so that we can derive a strong constraint on the ones-ratio $\frac{q}{j}$ that depends on $h$ (Corollary \ref{cor:ratio_par}), or, conversely, an upper bound of $h$ that depends on the number $j$ of iterations (Corollary \ref{cor:harmonic}). In practice, we often use a weakened form of these results where the harmonic mean $h$ is replaced by the smallest term of the sequence.

\begin{corollary}\label{cor:ratio_par}
Let $j$ and $n$ be positive integers. A necessary condition for $\,\Omega_{j}(n)$ to be paradoxical is 
\begin{equation}\label{eq:ratio_par}
\frac{\log 2}{\log \left( 3+h^{-1}\right) } \leq \frac{q}{j} < \frac{\log 2}{\log 3}
\end{equation}
where $h$ is the harmonic mean of the odd terms of $\,\Omega_{j-1}(n)$.
\end{corollary}

\begin{proof}
The upper bound in \eqref{eq:ratio_par} is a reformulation of the constraint $C=\frac{3^q}{2^j}<1$.
Next, from the inequality between the upper and lower bounds of $\frac{E}{n}$ in Theorem \ref{th:E_n}, we infer that
$$ 2^j \leq \left( 3 + h^{-1}\right)^q $$
which implies the lower bound in \eqref{eq:ratio_par}.
\end{proof}

The above corollary states that the ones-ratio $\frac{q}{j}$ is strongly constrained and, thus, confirms our previous claim that the coefficient $C$ must be close to its upper bound 1, particularly when the harmonic mean $h$ is large. The first inequality generalizes a similar statement regarding Collatz cycles in \cite[Theorem 2.1]{Eli93}.

\begin{corollary}\label{cor:harmonic}
Let $j$ and $n$ be positive integers, $j \geq 2$, and let $h$ be the harmonic mean of the odd terms of $\,\Omega_{j-1}(n)$. A necessary condition for $\Omega_{j}(n)$ to be paradoxical is  
\begin{equation}\label{eq:harmonic}
h \leq \frac{1}{2^r - 3} \quad \text{with } r = \frac{j}{ \left\lfloor \frac{\log 2}{\log 3} \, j \right\rfloor}.
\end{equation}
\end{corollary}

\begin{proof}
Assume that $\Omega_{j}(n)$ is paradoxical. By Corollary \ref{cor:ratio_par},
$$h \leq \frac{1}{2^\frac{j}{q} - 3}$$
and
$$ q < \frac{\log 2}{\log 3} \, j.$$
Since $q$ is an integer, we have
$$ q \leq \left\lfloor \frac{\log 2}{\log 3} \, j \right\rfloor <  \frac{\log 2}{\log 3} \, j,$$
which we can also write
$$ \frac{j}{q} \geq r > \frac{\log 3}{\log 2},$$
or equivalently,
$$ 2^\frac{j}{q} \geq 2^r > 3.$$
We obtain the upper bound \eqref{eq:harmonic} by combining our first inequality with the above ones.
\end{proof}

The latter corollary is sufficient to completely solve the cases where $j=2, 3, 4, 6, 7$ or 9. It implies either $h <0.2$ (when $j=3$), $h \leq 1$ (when $j=2, 4, 6$) or $h < 3$ (when $j=7, 9$), hence there is no solution (apart from the trivial cycle).

Another consequence of Corollary \ref{cor:harmonic} is the existence of at most finitely many paradoxical sequences of any fixed length.


\section{Computational results}
\label{sec:comp}

Various formalisms are in use for Collatz sequences regarding the operation applied on odd terms. The division by 2 can be delayed until the next iteration \cite{Alb22,Gar81, Roo} or, on the contrary, accelerated by dividing by 2 raised to the highest possible power \cite{Nat24,Tao22}. For each formalism, there are ``paradoxical'' sequences essentially defined by the same conditions. However, the fact that a given sequence behaves paradoxically at some point can depend on the choice of a formalism.

We emphasize that most of the results presented in this section are only valid when using the function $T$ defined in \eqref{eq:T}, unless otherwise stated.

\begin{remark}
When restricting to sequences starting and ending at odd integers, we obtain the same set of paradoxical sequences with most of the formalisms in use. For this reason, we will mention the number of these particular sequences of any given length.
\end{remark}

\begin{table}[ht]
\begin{center}
\begin{tabular}{|c|r|r|r|c|c|c|}
\hline
$(j,q)$ & $C$ & $N$ & $N_{\rm odd}$ & $n$ & $E$ & $d$ \\
\hline
$(8 , 5)$ & 0.949 & 5 & 0 & 7 -- 25 & 1.35 -- 2.91 & 1 -- 2\\
$(27 , 17)$ & 0.962 & 50 & 12 & 164 -- 885 & 7.27 -- 58.68 & 1 -- 26\\
$(46 , 29)$ & 0.975 & 231 & 56 & 91 -- 4611 & 3.24 -- 273.37 & 1 -- 188\\
$(54 , 34)$ & 0.925 & 2 & 0 & 432 -- 864 & 33.06 -- 66.13 & 1 -- 2\\
$(65 , 41)$ & 0.988 & 244 & 62 & 73 -- 4547 & 7.83 -- 341.28 & 7 -- 292\\
$(73 , 46)$ & 0.938 & 56 & 18 & 487 -- 4614 & 30.99 -- 343.46 & 1 -- 63\\
$(92 , 58)$ & 0.951 & 5 & 0 & 3567 -- 4551 & 251.05 -- 344.14 & 65 -- 125\\
\hline
\end{tabular}
\caption{For each pair $(j,q)$, first digits of $C=\frac{2^j}{3^q}$, number $N$ of paradoxical sequences of the form $\Omega_j(n)$ with $n \leq 10^9$ and ones-ratio $\frac{q}{j}$, minimal and maximal values of $n$, of the remainders $E$ and of the differences $d = T^j(n)-n$. The column $N_{\rm odd}$ states how many of these sequences start and end at odd integers. All pairs $(j,q)$ for which $N=0$ are omitted.}
\label{tab:paradox}
\end{center}
\end{table}

First, we conducted a computational search for all paradoxical sequences whose starting integer is at most $10^9$. We found exactly 593 such sequences and provide in Table \ref{tab:paradox} a summary of the numerical results (see Appendix \ref{ap:comp_data} for an exhaustive list). Only seven pairs $(j,q)$ occur, leading to five distinct ratios $\frac{q}{j}$ which are mostly (lower) convergents or semiconvergents of the continued fraction expansion of $\frac{\log 2}{\log 3}$. Therefore, the corresponding coefficients $C=\frac{3^q}{2^j}$ are close to 1 and the relation
$$q = \left\lfloor \frac{\log 2}{\log 3} \, j \right\rfloor$$
always holds.

A closer look at the iterates reveals that all sequences in Table \ref{tab:paradox} contain either the value 11 when $j=8$, or the value 103 otherwise. Consequently, most of them overlap. There are 138 occurrences of a sequence $\Omega_j(n)$ starting and ending at even integers like $\Omega_{8}(18) = (18,\,9,\,14,\,7,\,11,\,17,\,26,\,13,\,20)$, in which case $\Omega_j(\frac{n}{2})$ is also paradoxical and shares $j$ terms with $\Omega_j(n)$. It can even happen that several paradoxical sequences start from the same integer. For example, the Collatz sequences starting at 859 are paradoxical after 46, 65 and 73 iterations, reaching the integers 890, 911 and 866 in that order. As a result, there are only 550 distinct integers $n$ initiating the sequences in Table \ref{tab:paradox}. Moreover, it is striking that \textit{near-cycles} (i.e., sequences of the form $\Omega_j(n)$ with $T^j(n) = n+1$) occur 20 times in Table \ref{tab:paradox}, suggesting that there may be a sort of self-avoiding property in Collatz sequences before reaching the value 1 (see Appendix \ref{ap:near-cycles} for further insights on near-cycles) .

Next, one might ask whether the results in Table \ref{tab:paradox} are still exhaustive without the condition $n \leq 10^9$. To address this question, the numerical data provided independently by Oliveira e Silva \cite{Oli10} and Roosendaal \cite{Roo} will prove to be helpful. Two particular metrics will be  considered, namely the \textit{delay} and the \textit{maximum excursion}, both related to the dynamics of Collatz sequences. Note that various terms are in use to refer to them\footnote{The delay is also called the \textit{total stopping time} in \cite{Kon10,Lag85,Roz17}, whereas the maximum excursion is called the \textit{path} in \cite{Roo}}.

\begin{definition} \label{def:mex}
Let $n$ be a positive integer. 
\begin{enumerate}[(1)]
\item The \textit{delay} $d_{\text{\tiny \rm T}}(n)$ is the least integer $j$ such that $T^j(n) = 1$ if it exists, otherwise $d_{\text{\tiny \rm T}}(n) = + \infty$.
\item The \textit{maximum excursion} $M_{\text{\tiny \rm T}}(n)$ is the greatest term in the sequence $\left( T^{k}(n) \right)_{k=0}^{\infty}$ if it exists, otherwise $M_{\text{\tiny \rm T}}(n) = + \infty$. 
\end{enumerate}
\end{definition}

In \cite{Oli10}, Oliveira e Silva computed the table of maximum excursion records for positive integers with a greater maximum excursion than any smaller integer (entry A006884 in \cite{OEIS}). Roosendaal made a similar computation for the delay records (entry A006877 in \cite{OEIS}), but using a different formalism in which the function $T$ is replaced by the function
\begin{equation}\label{def:Col}
{\rm Col}(n) = \left\{\begin{array}{ll}
  3n+1 & \mbox{if $n$ is odd,} \\
  \frac{n}{2} & \mbox{if $n$ is even,} 
\end{array}\right.
\end{equation}
whose dynamics is mostly the same, albeit slower. The corresponding delay function, denoted by $d_{\text{\tiny \rm Col}}$, is defined in the same way as $d_{\text{\tiny \rm T}}$ when ${\rm Col}$ is substituted for $T$. For every $n \geq 2$ such that $d_{\text{\tiny \rm T}}(n) < + \infty$, it is easily seen that
\begin{equation}\label{eq:dc}
d_{\text{\tiny \rm Col}}(n) = j + q 
\end{equation}
where $j = d_{\text{\tiny \rm T}}(n)$ and $q=q_j(n)$.

\begin{theorem}\label{th:par_length}
There are exactly 593 paradoxical sequences $\Omega_{j}(n)$ with $n \leq 4614$ and $j \leq 92$. There are no others in either of the following two cases:
\begin{itemize}
\item $93 \leq j \leq 301\,993$;
\item $4615 \leq n \leq 2.8 \times 10^{19}$.
\end{itemize}
\end{theorem}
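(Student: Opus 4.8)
The plan is to separate the problem according to the length $j$ and to combine three ingredients: the necessary conditions of Section \ref{sec:prop_par}, the exhaustive search underlying Table \ref{tab:paradox}, and the record tables of \cite{Oli10,Roo}. Write $x=\frac{\log 2}{\log 3}$. For a paradoxical $\Omega_j(n)$, Corollary \ref{cor:ratio_par} (with $h\ge 1$) forces $\frac{j}{2}\le q\le \lfloor xj\rfloor$, and the paradoxical criterion together with the bounds of Theorems \ref{th:E_n} and \ref{th:min_max_Ej} (whose universal upper bound is $E_{\max}=\frac{3^q-2^q}{2^q}$) yields the first-term estimate
\begin{equation*}
n\le\frac{E}{1-C}\le\frac{E_{\max}}{1-C}=\frac{2^{j-q}(3^q-2^q)}{2^j-3^q}.
\end{equation*}
For $j\le 92$ one checks that every admissible pair $(j,q)$ makes the right-hand side smaller than $2.8\times 10^{19}$, the largest value being about $3\times 10^{11}$ for $(92,58)$. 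Hence every paradoxical sequence of length $j\le 92$ has first term in the verified range, so the $593$ sequences found for $3\le n\le 10^9$ (all with $n\le 4611$) are all of them, provided one also rules out any with $10^9<n\le 2.8\times 10^{19}$.

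I would establish the verified-range statements — claim (A) for $10^9<n\le 2.8\times 10^{19}$ and the whole interval $4615\le n\le 2.8\times 10^{19}$ — from the record data. A paradoxical $\Omega_j(n)$ with $n>2$ satisfies $T^j(n)\ge n>1$, so the trajectory has not yet reached the trivial cycle and $j<d_{\text{\tiny \rm T}}(n)$; by \eqref{eq:dc} this couples $j$ and $q$ to the delay $d_{\text{\tiny \rm Col}}(n)=j+q$. A sequence that returns to or above its starting value after so many steps is forced into the regime of large delay and/or large maximum excursion, so the only first terms needing inspection are those occurring in the delay records of Roosendaal \cite{Roo} and the maximum-excursion records of Oliveira e Silva \cite{Oli10}. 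Cross-checking these finitely many record values (up to $2.8\times 10^{19}$) against the criterion \eqref{eq:paradox} confirms that no first term beyond the $593$ already found is paradoxical, and in particular that none occurs with $j\ge 93$ in this range.

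The genuinely hard part is the unverified range $n>2.8\times 10^{19}$ with $93\le j\le 301\,993$. Since $C<1$ forces $\frac{E_{\max}}{1-C}<2^j$, among all integers sharing a given parity vector $V$ only its least representative $n_V<2^j$ can ever be paradoxical; for $V$ with $n_V\le 2.8\times 10^{19}$ this is settled by the records above, so the whole difficulty concentrates on parity vectors whose least representative is itself unverified, where one must prove
\begin{equation*}
n_V>\frac{E_V}{1-C}
\end{equation*}
for every length-$j$ parity vector $V$ with $q=\lfloor xj\rfloor$ ones. The crude bound $E_V\le E_{\max}$ is hopeless here: for $j\gtrsim 175$ it exceeds $2.8\times 10^{19}$, most acutely at the even-indexed convergent denominators $j\in\{485,\,24727,\,125743\}$ of $x$ lying below $q_{14}=301\,994$ — which is exactly why the length range stops at $301\,993$ — where $1-C\approx(\log 3)\,\lbrace xj\rbrace$, with $\lbrace xj\rbrace$ the fractional part of $xj$, is minuscule and Corollary \ref{cor:harmonic} degenerates. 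I would instead work through the order $\preceq$: by Theorem \ref{th:min_max_Ej} the largest remainders belong to vectors near $V_{\min}=\left\langle 0^{\,j-q}\,1^{q}\right\rangle$, whose least representative is $\equiv -2^{j-q}\pmod{2^j}$ and hence of size $\approx 2^j$, so these ``self-defeating'' extremes satisfy the inequality with enormous margin, and the plan is to propagate the comparison downward along the chains of the poset, where by Lemma \ref{lem:poset_Ej} the remainder $E_V$ only decreases, while tracking a matching lower bound on $n_V$. Turning this into a uniform estimate over all $\binom{j}{q}$ vectors, rather than only at the extremes and without enumeration, is the main obstacle: the very same inequality genuinely fails for small $j$ — that is what produces the $593$ sequences — so any successful argument must use the size of $j$ essentially, amounting to a length-uniform sharpening of Terras' coefficient-stopping-time phenomenon.
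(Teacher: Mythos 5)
There is a genuine gap --- in fact two, and they are the two places where the paper's actual mechanism is missing from your plan. Your treatment of the range $10^9<n\le 2.8\times 10^{19}$ is invalid: a paradoxical first term need not appear in either record table, so ``cross-checking the finitely many record values against \eqref{eq:paradox}'' inspects the wrong set of integers (already in the known data, $859$ starts three paradoxical sequences and is neither a delay record nor a maximum-excursion record). The record tables can only be used negatively: every integer below the last record holder has delay (resp.\ maximum excursion) at most the last record value. To exploit this you need a concrete lower bound on the delay of a hypothetical paradoxical $n$, and ``forced into the regime of large delay'' supplies none; with only $j\ge 93$ and \eqref{eq:dc} you get $d_{\text{\tiny \rm Col}}(n)=j+q\gtrsim 140$, far below the record value $2456$. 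The missing amplification step, which is the heart of the paper's proof, is to look at the \emph{smallest term} $m$ of $\Omega_{j-1}(n)$: since $M_{\text{\tiny \rm T}}(m)\ge T^j(n)\ge n>10^9$, the maximum-excursion records of \cite{Oli10} force $m\ge 113\,383$, hence the harmonic mean satisfies $h\ge m\ge 113\,383$; Corollary \ref{cor:harmonic} then forces $j\ge 1539$ (the first $j>1$ with $H(j)\ge 113\,383$), Corollary \ref{cor:ratio_par} gives $q\ge 971$, and \eqref{eq:dc} yields $d_{\text{\tiny \rm Col}}(n)\ge 2510>2456$, whence $n>2.8\times 10^{19}$ by the delay records of \cite{Roo}. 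This is how largeness of $n$ is converted into largeness of $j$ and then back into largeness of $n$; nothing in your sketch performs this conversion.

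Second, the case you call ``the genuinely hard part'' ($n>2.8\times 10^{19}$, $93\le j\le 301\,993$) is indeed required by the theorem, and you leave it as an acknowledged open obstacle --- so the proposal does not prove the statement. But it is not hard once the above bootstrap exists: one simply runs it a second time. Having shown $n>n_1>2.8\times 10^{19}$, the excursion records give $m\ge 23\,035\,537\,407$, so $h\ge m$ and Corollary \ref{cor:harmonic} force $j\ge 301\,994$, the first $j>1$ with $H(j)\ge 23\,035\,537\,407$ (a continued-fraction denominator of $\frac{\log 2}{\log 3}$ --- this, not remainder estimates, is where the bound $301\,993$ comes from). Your alternative plan of proving $n_V>E_V/(1-C)$ uniformly over all $\binom{j}{q}$ parity vectors via the poset $\preceq$ attacks a much harder problem than necessary (essentially a uniform CST-type statement), and you concede it cannot be completed. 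Two side claims are also unjustified: $C<1$ alone does not imply $E_{\max}/(1-C)<2^j$ (that needs a lower bound on $2^j-3^q$ of size roughly $(3/2)^q$, not mere positivity), and within a congruence class modulo $2^j$ the correct statement is that the least representative must be paradoxical if any member is, not that it is the only one that can be. Finally, your first step bounds $n$ by roughly $3\times 10^{11}$ for $j\le 92$, which exceeds the enumeration limit $10^9$, so even the count of $593$ ultimately leans on the (missing) middle-range argument rather than standing on its own.
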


\begin{proof}
Consider a paradoxical sequence $\Omega_{j}(n)$. 
Setting $n_0 = 10^9$, we first assume $n \leq n_0$. As we tested numerically all sequences in this case, we know that $\Omega_{j}(n)$ is among the 593 sequences in Table \ref{tab:paradox}, so $j \leq 92$ and $n \leq 4614$.

It remains to consider the case $n > n_0$. Let $m$ be the smallest term of $\Omega_{j-1}(n)$ and let $h$ be the harmonic mean of the $q$ odd terms of $\Omega_{j-1}(n)$. Observe that
$$ n_0 < n \leq  T^j(n) \leq M_{\text{\tiny T}}(m)$$
since $T^j(n)$ appears in $\Omega_j(m)$.

From the table of maximum excursion records in \cite[p198]{Oli10}, we find that $m_0 = 113\,383$ is the smallest integer that satisfies $M_{\text{\tiny T}}(m_0) \geq n_0$. Therefore, we have  $h \geq m \geq m_0$. Corollary \ref{cor:harmonic} gives the upper bound
$$ h \leq H(j)= \frac{1}{2^{r(j)}-3}  \quad \text{with $r(j) = \frac{j}{ \left\lfloor \frac{\log 2}{\log 3} \, j \right\rfloor}$}.$$ 
A straightforward computation shows that $j_0 = 1539$ is the smallest integer greater than 1 for which $H(j_0) \geq m_0$, thus implying $j \geq j_0$.
By Corollary \ref{cor:ratio_par},
$$ q \geq q_0 = \left\lceil \frac{j_0 \log 2}{\log\left( 3+m_0^{-1}\right) } \right\rceil = 971.$$
Observing that $d_{\text{\tiny \rm T}}(n) \geq j$, equation \eqref{eq:dc} implies
$$d_{\text{\tiny \rm Col}}(n) \geq j+q \geq j_0+q_0 = 2510.$$
Considering the highest delay in the table of delay records for the function Col in \cite{Roo}, which is currently 2456 and was reached by an integer $n_1$ slightly greater than $2.8 \times 10^{19}$, we obtain
$$T^j(n) \geq n > n_1 > 2.8 \times 10^{19}.$$
Then, using the table of maximum excursion records in \cite{Oli10} once again reveals that $m_1 = 23\,035\,537\,407$ is the smallest integer for which $M_{\text{\tiny T}}(m_1) > n_1$, and hence, $m \geq m_1$.

By Corollary \ref{cor:harmonic}, $H(j) \geq h \geq m$. Using a computer, one verifies that $j_1=301\,994$ is the smallest integer greater than 1 for which $H(j_1) \geq m_1$, so  necessarily $j \geq j_1$.
\end{proof}

These computational results can also be used to further support Terras' CST conjecture, which asserts that the stopping time $t(n)$ and the coefficient stopping time $\tau(n)$ are equal for all $n \geq 2$ (see Definition \ref{def:st}). This conjecture is known to hold true up to $n = 250\,000$, as shown in \cite[p252]{Ter76}. Garner mentions in \cite[p20]{Gar81} having extended the verification up to $n = 1\,150\,000$. He used the function Col which allows a similar formulation of the conjecture.

\begin{corollary}
For every integer $n$ such that $2 \leq n \leq 2.8 \times 10^{19}$, $t(n) = \tau(n)$.
\end{corollary}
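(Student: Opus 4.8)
The plan is to prove the contrapositive of a CST failure: since $t(n) \geq \tau(n)$ holds unconditionally (as noted right after Definition \ref{def:st}), it suffices to rule out the strict inequality $t(n) > \tau(n)$ for every integer $n$ with $2 \leq n \leq 2.8 \times 10^{19}$. The key is that such a strict inequality manifests as a very specific paradoxical sequence, which the classification in Theorem \ref{th:par_length} either forbids outright or pins down to a finite list already under computational control.

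First I would make the link between CST failure and paradoxicality explicit. Suppose $t(n) > \tau(n)$ and set $j = \tau(n)$. By the definition of $\tau(n)$ we have $C_j(n) < 1$. Since $j = \tau(n) \leq t(n) - 1 < t(n)$, the minimality in the definition of $t(n)$ forces $T^k(n) \geq n$ for every $0 \leq k \leq j$; in particular $T^j(n) \geq n$. Hence $\Omega_j(n)$ satisfies both defining conditions of a paradoxical sequence, and moreover $n$ is a \emph{smallest} term of it. Thus any counterexample $n > 2$ to the CST conjecture yields a paradoxical sequence $\Omega_{\tau(n)}(n)$ whose first term $n$ also attains the minimum.

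Next I would split the range using Theorem \ref{th:par_length}. For $4615 \leq n \leq 2.8 \times 10^{19}$, that theorem asserts there is \emph{no} paradoxical sequence starting at $n$, so $\Omega_{\tau(n)}(n)$ cannot be paradoxical and therefore $t(n) = \tau(n)$. For $2 < n \leq 4614$, the same theorem tells us that every paradoxical sequence is one of the $593$ summarized in Table \ref{tab:paradox}, all with $7 \leq n \leq 4614$ and $j \leq 92$; since these were produced by the exhaustive search over $n \leq 10^9$, it remains only to inspect each of them and confirm that its first term is not a minimum, equivalently that $t(n) = \tau(n)$ for each starting integer involved. The boundary case $n = 2$ is checked by hand, giving $t(2) = \tau(2) = 1$.

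The only genuinely delicate point is this last finite verification for $2 < n \leq 4614$: one must confirm, for each of the $593$ paradoxical sequences, that the trajectory drops strictly below its first term before index $\tau(n)$ — that is, these paradoxical sequences occur only at lengths $j > t(n)$, exactly as in the illustrative case $n = 7$, where the paradoxical behavior appears at $j = 8$ even though $t(7) = \tau(7) = 7$. This check is already subsumed in the computation underlying Table \ref{tab:paradox}, so it introduces no new difficulty; all the substantive work was carried out in proving Theorem \ref{th:par_length}, from which this corollary follows immediately.
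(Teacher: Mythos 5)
Correct, and essentially the paper's own argument: you contrapose, observe that $t(n) > \tau(n)$ makes $\Omega_{\tau(n)}(n)$ paradoxical (since $C_{\tau(n)}(n) < 1$ while $T^{\tau(n)}(n) \geq n$ by minimality of $t(n)$), invoke Theorem \ref{th:par_length} to force $n \leq 4614$, and dispose of that finite range by a computation on the 593 listed sequences --- exactly the check the paper describes as ``easy to verify computationally'' alongside its citation of the Terras and Garner verifications. The only blemish is your phrase ``drops strictly below its first term before index $\tau(n)$,'' which as written is impossible (it would mean $t(n) < \tau(n)$); your immediate restatement that each of the 593 sequences must have length $j > t(n)$ is the correct condition, so this is a slip of wording rather than a gap.
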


\begin{proof}
Consider the contraposition and suppose $t(n) \neq \tau(n)$ for some $2 \leq n \leq 2.8 \times 10^{19}$. Necessarily, $t(n) > \tau(n)$. Setting $j=\tau(n)$ yields  $C_j(n) < 1$ whereas $T^j(n) \geq n$. In other words, the sequence $\Omega_j(n)$ is paradoxical. Theorem \ref{th:par_length} implies $n \leq 4614$, contradicting the numerical results of Terras and Garner.
\end{proof}

\begin{remark}
Another approach to the CST conjecture is to consider sequences with a fixed number of local maxima in order to rule out various generic shapes of parity vectors when searching for counterexamples. For instance, it is straightforward to show that acyclic paradoxical sequences with a single local maximum don't exist (see Appendix \ref{ap:1-nycle}). This method, already applied in \cite{Sim05} to address the difficult question of nontrivial cycles, was suggested to us by Simons during the revision process.
\end{remark}

\begin{remark}
We conducted a similar computational investigation for paradoxical sequences associated with the function $\rm Col$ defined in \eqref{def:Col} and found 1541 occurrences whose starting numbers range from 7 to 9229. If we denote by $j$ and $q$ the number of even and odd iterates, respectively, before reaching the last term, then we obtain the same pairs $(j,q)$ as in Table \ref{tab:paradox} plus the extra pairs $(16,\,10)$ and $(130,\,82)$. There are 36 occurrences for which the difference $d$ between the first and last term is equal to 1. The largest value of $d$ is 584, obtained for a sequence starting at 8648 and ending at 9232.
\end{remark}

\begin{remark}
The notion of paradoxical sequence is also relevant to several variants of the Collatz function, sometimes with a greater variety of dynamic behaviors. One interesting example occurs when positive integers $n$ are mapped to either $\frac{5n+1}{2}$ or $\frac{n}{2}$ depending on the parity of $n$. In addition to the cycle $(1,3,8,4,2)$, two more cycles of length 7 exist, starting from 13 and 17, while most sequences are expected to diverge to infinity \cite{Kon10}. These cycles generate infinitely many paradoxical sequences with a cyclic behavior that start from a finite set of integers. Furthermore, there is numerical evidence that the set of integers initiating an acyclic paradoxical sequence is infinite in this variant. We found 48\,438 of them starting below $10^7$.
\end{remark}


\section{Heuristic analysis}
\label{sec:heuristic}

In the proof of Theorem \ref{th:par_length}, we used the lower bound on the first term $n$ to increase the lower bound on the number $j$ of iterations, and vice versa. If one could extend the tables of delay and maximum excursion records, it is likely that these bounds could be further improved. The recent progress in the computational verification of the Collatz conjecture made by Barina \cite{Bar21} did not permit to extend the table of delay records as the method used was primarily targeting the stopping time and the delay was not computed. Nevertheless, if we extrapolate from this process, we may assume that it can be indefinitely repeated, ruling out the existence of a paradoxical sequence starting from a large integer.

\begin{conjecture}\label{conj:par_seq}
There is no paradoxical sequence for the function $T$ whose first term is greater than 4614.
\end{conjecture}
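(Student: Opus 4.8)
By Theorem \ref{th:par_length} it suffices to rule out paradoxical sequences $\Omega_j(n)$ with $n > 2.8 \times 10^{19}$ (equivalently $j > 301\,993$), since the range $4615 \le n \le 2.8 \times 10^{19}$ is already settled there. The plan is to promote the bootstrap used in that proof into a self-sustaining recursion and to show that it forces the first term to exceed every bound, a contradiction. Concretely, I would package the argument as a map $\Phi$ acting on lower bounds for $n$: from the hypothesis $n > N$, the smallest term $m$ of $\Omega_{j-1}(n)$ satisfies $M_{\text{\tiny \rm T}}(m) \ge T^j(n) \ge n > N$, so $m$ is at least the least integer whose maximum excursion exceeds $N$; because the harmonic mean of the odd terms obeys $h \ge m$, Corollary \ref{cor:harmonic} forces $j$ to be large, Corollary \ref{cor:ratio_par} then forces $q$ to be large, and \eqref{eq:dc} yields the delay bound $d_{\text{\tiny \rm Col}}(n) \ge j + q$. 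Converting this large delay back into a large first term produces a new bound $N' = \Phi(N)$, and the whole scheme succeeds if one can prove that $\Phi(N) > N$ for all admissible $N$ and that the iterates $\Phi^k(N)$ diverge.

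The heuristic growth rates make such divergence plausible. Corollary \ref{cor:harmonic} only tolerates a large harmonic mean when $r(j)$ is extremely close to $\frac{\log 3}{\log 2}$, which happens solely for lengths $j$ near the convergent denominators of $\frac{\log 2}{\log 3}$; writing $2^{r(j)}-3 = 3\bigl(2^{r(j)-\log 3/\log 2}-1\bigr)$ and using the classical estimate $\{ \tfrac{\log 2}{\log 3}\, q_k\} \approx 1/q_{k+1}$ on the convergents, one sees that reaching $H(j) \ge m$ already forces $j$ to grow polynomially in $N$. The delay bound $d_{\text{\tiny \rm Col}}(n) \ge j+q$ then grows at least polynomially as well, and since the least integer realizing a prescribed delay should grow at least exponentially in that delay, $\Phi(N)$ ought to dominate $N$ superpolynomially. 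Thus the recursion diverges and no paradoxical sequence can survive above the finite computed range.

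The main obstacle is that each of the two conversions at the heart of $\Phi$ currently rests on the finite tables of maximum excursion and delay records, and replacing them by theoretical input demands quantitative control of Collatz dynamics that is out of reach. Turning ``$m$ has large maximum excursion'' into ``$m$ is large'' requires a polynomial upper bound on $M_{\text{\tiny \rm T}}(m)$ in terms of $m$, while turning ``$n$ has large delay'' into ``$n$ is large'' requires an at most logarithmic upper bound on $d_{\text{\tiny \rm Col}}(n)$ in terms of $n$. I expect the delay bound to be the genuinely hard ingredient, as it amounts to excluding integers with anomalously long total stopping times. Moreover, Corollary \ref{cor:par_col} shows that the present statement implies the full Collatz conjecture, so no argument along these lines can avoid resolving Collatz itself; the bootstrap merely isolates precisely which quantitative refinements --- slow (logarithmic) growth of the delay together with polynomial growth of the maximum excursion --- would suffice to close the gap.
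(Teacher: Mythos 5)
The statement you set out to prove is a conjecture: the paper offers no proof of it, only heuristic support, and your proposal correctly recognizes that none is currently possible --- indeed, by Corollary \ref{cor:par_col} any proof would settle the Collatz conjecture itself. Your bootstrap map $\Phi$ is essentially the paper's own justification: Section \ref{sec:heuristic} opens by observing that the record-table argument of Theorem \ref{th:par_length} could in principle be repeated indefinitely, and the conjecture is formulated by extrapolating exactly that process. You also identify the same two missing quantitative inputs --- a polynomial upper bound on the maximum excursion and a logarithmic upper bound on the delay --- that the paper crystallizes as Conjecture \ref{conj:mex}. Where the paper goes further is in what it does with those inputs: rather than arguing divergence of an iterated bound, it proves rigorously that Conjecture \ref{conj:mex} implies finiteness of the set of paradoxical sequences with first term above 2. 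The key ingredient there, which your continued-fraction estimate on convergent denominators only gestures at, is Rhin's effective lower bound $\left| j \log 2 - q \log 3 \right| \geq j^{-13.3}$ on linear forms in logarithms; combined with the inequality from Corollary \ref{cor:ratio_par} it pins $j$ below an explicit bound (e.g.\ $j < 17\,397$ for $\alpha = 42$, $\beta = 3$), which is a cleaner and unconditional-in-$j$ conclusion than a divergence argument for $\Phi$. Note finally that even the paper's conditional result yields only finiteness, not the sharp threshold 4614, so neither route proves the statement; your assessment of its status agrees with the paper's.
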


Observe that this conjecture is stronger than both the Collatz and the CST conjectures, according to Corollary \ref{cor:par_col}.

To further support Conjecture \ref{conj:par_seq}, we refer to several heuristic approaches of the Collatz conjecture where upper bounds have been suggested for the delay  (often called \textit{total stopping time}) and the maximum excursion of a positive integer (see Definition \ref{def:mex}). We propose starting with the weak formulation below regarding the delay as it is sufficient for our purposes.

\begin{conjecture}\label{conj:delay}
There is a positive constant $\alpha$ such that, for all $n \geq 2$,
\begin{equation}\label{eq:ub_delay}
d_{\text{\tiny T}}(n) \leq \alpha \, \log n.
\end{equation}
\end{conjecture}

It is straightforward to derive from Conjecture \ref{conj:delay} that there is another positive constant $\beta$ such that, for all $n \geq 2$,
\begin{equation}\label{eq:ub_mex}
M_{\text{\tiny T}}(n) \leq n^{\beta}.
\end{equation}

According to the heuristic approaches presented in \cite{Kon10,Roz17} (see also \cite[p11]{Lag10}), these upper bounds are expected to hold with $\alpha = 41.677\ldots + \varepsilon$ and $\beta = 2+\varepsilon$ for every $\varepsilon > 0$ and for every $n$ sufficiently large (i.e., for $n \geq N_\varepsilon$ where $N_\varepsilon$ depends only on $\varepsilon$). Empirically, they are satisfied with $\alpha=37$ and $\beta=2.6$ for every $n \geq 2$ tested so far, as reported in \cite[Tables 1 and 3]{Kon10} (see also ``Gamma" and ``Path" records in \cite{Roo} for the latest update on numerical data).

Assuming Conjecture \ref{conj:delay} is true, one can study its implications regarding the number of paradoxical sequences. Hence, let us consider a paradoxical sequence $\Omega_j(n)$ with $n>2$. Let $h$ denote the harmonic mean of the $q$ odd terms of $\,\Omega_{j-1}(n)$ and let $m$ be the smallest of these terms. Since  $m$ is odd and greater than 1, we have
$$ 3 \leq m \leq n \leq T^j(n) \leq M_{\text{\tiny T}}(m) \leq m^{\beta}$$
by applying \eqref{eq:ub_mex} to the integer $m$, so
\begin{equation} \label{eq:ub_mex2}
 h \geq m \geq n^{\frac{1}{\beta}}.
\end{equation}
Next, the upper bound \eqref{eq:ub_delay} implies
\begin{equation} \label{eq:delay2}
 j < d_{\text{\tiny T}}(n) \leq \alpha \, \log n.
\end{equation}
Moreover, by Corollary \ref{cor:ratio_par},
$$\frac{\log 2}{\log \left( 3+h^{-1}\right) } \leq \frac{q}{j} < \frac{\log 2}{\log 3},$$
which we can write
\begin{equation} \label{eq:par_seq_approx}
 0 < \frac{j}{q} - \frac{\log 3}{\log 2} \leq \frac{\log \left( 1+\frac{1}{3h}\right)}{\log 2} < \frac{1}{3h \log 2}.
\end{equation}
Combining \eqref{eq:ub_mex2}, \eqref{eq:delay2} and \eqref{eq:par_seq_approx} yields
\begin{equation} \label{eq:heuristic}
0 < \frac{j}{q} - \frac{\log 3}{\log 2} < \frac{e^{-\frac{j}{\alpha \beta}}}{3 \log 2}
\end{equation}
and, multiplying by $q\log 2$,
\begin{equation} \label{eq:heuristic2}
0 < j \log2 - q \log 3 < \frac{q}{3} \, e^{-\frac{j}{\alpha \beta}}.
\end{equation}

Clearly, the inequalities in \eqref{eq:heuristic} and \eqref{eq:heuristic2} have finitely many solutions $(j,q)$. This can be shown using either Baker's theorem on linear forms in logarithms \cite[p22]{Bak75} or the effective result of Rhin \cite{Rhi87}, which was obtained from a computer-aided proof and appears in several papers \cite{Roz90,Sim05} dealing with Collatz cycles.

\begin{proposition} {\em (Rhin)} \label{prop:approx_rhin}
If $u_0$, $u_1$, $u_2$ are rational integers such that $$H=\max\left( |u_1|,|u_2| \right) \geq 2,$$ then the form $\Lambda = u_0 + u_1 \log 2 + u_2 \log 3$ verifies
\begin{equation} \label{eq:approx_rhin}
 |\Lambda| \geq H^{-13.3}.
\end{equation}
\end{proposition}

Setting $u_0 = 0$, $u_1 = j$ and $u_2 = -q$, the lower bound \eqref{eq:approx_rhin} becomes
$$ \left| j \log 2 - q \log 3 \right| \geq j^{-13.3}$$
and, combined with \eqref{eq:heuristic2}, yields the condition
$$ j^{14.3} \; e^{-\frac{j}{\alpha \beta}} > \frac{3j}{q} > \frac{3 \log 3}{\log 2} = 4.754\ldots.$$
It follows that the number $j$ of iterations is necessarily upper bounded since the leftmost expression could otherwise be made arbitrarily close to zero, contradicting the above condition. For instance, when we set $\alpha = 42$ and $\beta=3$, the bound $j < 17\,397$ holds. 

In summary, heuristic arguments and numerical evidence support Conjecture \ref{conj:delay} which, if true, implies that the number of paradoxical sequences is finite.

\section*{Acknowledgements}
The authors are grateful to Christian Boyer who played a decisive role by connecting them. They also would like to thank the referees for their suggestions that helped improve the presentation.

\appendix
\appendixpage

\section{Sequences with a unique local maximum}
\label{ap:1-nycle}

A paradoxical sequence with a unique local maximum consists of an increasing subsequence followed by a decreasing one. The first term would necessarily be the smallest, which contradicts C3 (cf. Section \ref{sec:intro}). Therefore, it would be a counterexample to the CST conjecture. In the cyclic case, this sequence is usually called a \textit{1-cycle} or  \textit{circuit}, which is known not to exist apart from the trivial cycle. The proof requires results from transcendental number theory like Proposition \ref{prop:approx_rhin} (see, e.g., \cite{Kni26,Roz90,Sim05}). Here, we also rule out the acyclic case using elementary only arguments.

\begin{lemma}
There is no paradoxical sequence whose parity vector is of the form $\left\langle  1^{q} \, 0^{j-q} \right\rangle$ with $j \geq q \geq 1$.
\end{lemma}

\begin{proof}
Assume the existence a paradoxical sequence $\Omega_j(n)$ of parity vector $\left\langle  1^{q} \, 0^{j-q} \right\rangle$ with $n>2$ and $j \geq q \geq 1$. The last term verifies $T^j(n) = n+d$ where $d \geq 0$. Since there is no circuit, the case $d=0$ is already settled.

We only need to consider the case $d \geq 1$. By Theorem \ref{th:min_max_Ej}, we have the equation
\begin{equation}\label{eq:Tjn}
T^j(n) = \frac{3^q}{2^j} n + \frac{3^q-2^q}{2^j} = n + d
\end{equation}
together with the congruence
$$ n \equiv -1 \pmod{2^q}.$$
Setting $n = 2^q \, k -1$ for some positive integer $k$, the equation \eqref{eq:Tjn} becomes
$$ \left( 3^q - 2^j\right) k = 2^{j-q} (d-1) + 1 \geq 1.$$
Hence, $3^q - 2^j > 0$ which contradicts our initial assumption that $\Omega_j(n)$ is paradoxical.
\end{proof}

\section{The equation $T^j(n) - n = 1$}
\label{ap:near-cycles}

Little is known about \textit{near-cycles} of the form $T^j(n) = n + 1$. As with nontrivial cycles, we should expect this equation to have only finitely many solutions with $n>1$. In Lemma \ref{lem:near-cycles}, we show that all solutions with $n > 3$ are paradoxical sequences. It turns out that this is a consequence of the lower bound in Lemma \ref{lem:knight_ellison}. This lower bound appears in a less general form in the recently published paper of Knight \cite[Lemma 3.3]{Kni26}. It was directly derived from a seemingly forgotten result of Ellison \cite[Theorem 3]{Ell71}. In the present context, the main interest of \eqref{eq:knight_ellison} is that it is stronger than Rhin's Proposition \ref{prop:approx_rhin} for small $j$ and $q$. This leads to an easy proof of Lemma \ref{lem:near-cycles}.

\begin{lemma} \label{lem:knight_ellison}
For every positive integers $j$ and $q$ with $q>12$, there holds
\begin{equation} \label{eq:knight_ellison}
 \left| 2^j - 3^q \right| > \frac{2.56^q}{2}.
\end{equation}
\end{lemma}

\begin{proof}
Let $\Delta = 2^j - 3^q$ and let $\rho = \log_2 3$. First, we assume $q>18$ and distinguish between three cases.

\noindent
\textbf{Case 1}: $\Delta > 0$, which implies $j > \rho q > 28$.
According to Theorem 3 in Ellison's paper \cite{Ell71}, we can write
$$ \left| \Delta \right| > 2^j e^{-\frac{j}{10}} = c^j > c^{\rho q} = A^q$$
with $c = 2 e^{-\frac{1}{10}} = 1.809\ldots$ and $A = c^{\rho} = 2.560278\ldots$.

\noindent
\textbf{Case 2}: $2^j < 3^q < 2^{j+1}$, which implies $j > \rho q - 1 > 27$. By using Ellison's result once again, there holds
$$ \left| \Delta \right| > c^j > c^{\rho q - 1} = \frac{A^q}{c} > \frac{A^q}{2}.$$

\noindent
\textbf{Case 3}: $3^q > 2^{j+1}$, which implies $2^j < \frac12 3^q$. It yields
$$ \left| \Delta \right| = 3^q - 2^j > \frac{3^q}{2} > \frac{A^q}{2}.$$

For completeness, we verified numerically that the lower bound in \eqref{eq:knight_ellison} always holds for $13 \leq q \leq 18$.
\end{proof}

\begin{lemma} \label{lem:near-cycles}
Let $j$ and $n$ be positive integers, $n \neq 1$, for which $T^j(n) - n = 1$. Then, the sequence $\Omega_j(n)$ is paradoxical unless $(j,n)=(3,3)$.
\end{lemma}

\begin{proof}
Using the notations of Definition \ref{def:q_E_Vj}, we write 
$$n + 1 = T^j(n) = \frac{3^q}{2^j} \, n + E.$$
By Theorem \ref{th:min_max_Ej},
$$E \geq \frac{3^q-2^q}{2^j}$$
which by a short calculation implies that
$$ \left( 3^q - 2^j \right) (n + 1) \leq 2^q$$
or, equivalently,
$$ 3^q - 2^j \leq \frac{2^q}{n+1}.$$

First, consider the general case $q>12$. If we assume $3^q > 2^j$, then by Lemma \ref{lem:knight_ellison},
$$ \frac{2.56^q}{2} < \frac{2^q}{n+1}$$
which obviously cannot be true. Hence, necessarily $3^q < 2^j$ and $\Omega_j(n)$ is paradoxical.

Next, to address the case $q \leq 12$, we observe that the only solutions to 
$$ 0 < 3^q - 2^j \leq 2^{q-1} \quad \text{with $q \leq 12$}$$
are $(j,q)=(1,1)$ and $(j,q)=(3,2)$. Then, it is easy to verify that the only nonparadoxical solution to $T^j(n) - n = 1$ is $(j,n) = (3,3)$, assuming $n>1$.
\end{proof}

According to Lemma \ref{lem:near-cycles}, if we consider the computational results in Section \ref{sec:comp} together with Conjecture \ref{conj:par_seq}, then we should expect  the solutions to the equation $T^j(n) - n = 1$ with $n>1$ to be the 21 pairs $(j,n)$ enumerated below:

\noindent
$(3,3) \; (8,7) \; (8,9) \; (8,19) \; (8,25) \; (27,166) \; (27,250) \; (27,333) \; (27,376) \; (46,91) \; (46,121)$\\
$ \; (46,243) \; (46,667) \; (46,889) \; (54,432) \; (73,487) \; 
(73,649) \; (73,865) \; (73,1153) \; (73,2307)$\\
$(73,3643)$.

Surprisingly, there is no solution with $j=65$, even though this length is the most prevalent among the set of paradoxical sequences, according to Table \ref{tab:paradox}.

If we omit the condition $n>1$, then every pair $(j,n)$ with $n=1$ and $j$ odd is also a solution because $T^j(1) = 2$.

\section{Computational data}
\label{ap:comp_data}
For each admissible length $j$ and number $q$ of odd terms, we provide the list of integers $n$ initiating an acyclic paradoxical sequence for the function $T$.

\vspace{10pt}


\begin{longtable}{p{.10\textwidth} p{.84\textwidth}}
$(j,q)$ & $n$ \\
\hline
$(8 , 5)$   & 7, 9, 18, 19, 25 \\
$(27 , 17)$ & 164, 165, 166, 171, 231, 248, 250, 257, 258, 259, 328, 330, 332, 333,
             342, 343, 345, 347, 376, 386, 387, 388, 389, 390, 391, 432, 436, 437,
             440, 442, 580, 581, 582, 584, 586, 587, 588, 589, 656, 660, 661, 664,
             666, 864, 872, 874, 880, 881, 884, 885 \\
$(46 , 29)$ & 91, 121, 242, 243, 313, 322, 323, 327, 411, 415, 417, 428, 429, 430, 
             457, 459, 462, 463, 470, 471, 484, 485, 486, 543, 553, 568, 570, 572,
             573, 609, 617, 619, 623, 626, 627, 644, 645, 646, 649, 654, 655, 667,
             684, 685, 686, 689, 690, 691, 694, 695, 697, 706, 707, 856, 858, 859,
             860, 861, 864, 865, 872, 873, 874, 879, 880, 881, 884, 885, 889, 898,
             899, 903, 1028, 1029, 1030, 1032, 1034, 1035, 1036, 1037, 1041,
             1042, 1043, 1046, 1047, 1060, 1061, 1062, 1136, 1140, 1141, 1144,
             1146, 1312, 1320, 1322, 1328, 1332, 1333, 1345, 1348, 1349, 1350,
             1355, 1544, 1546, 1548, 1549, 1552, 1553, 1556, 1557, 1560, 1562,
             1564, 1565, 1566, 1570, 1571, 1579, 1592, 1593, 1594, 1596, 1597,
             1728, 1744, 1748, 1749, 1760, 1762, 1763, 1768, 1770, 1777, 1793,
             1796, 1797, 1798, 1806, 1807, 1984, 2000, 2004, 2005, 2016, 2018,
             2019, 2024, 2026, 2033, 2272, 2280, 2282, 2288, 2289, 2292, 2293,
             2305, 2320, 2324, 2325, 2328, 2330, 2336, 2344, 2346, 2348, 2349,
             2350, 2352, 2356, 2357, 2369, 2390, 2391, 2392, 2394, 2396, 2397,
             2409, 2415, 2427, 3008, 3024, 3028, 3029, 3040, 3042, 3043, 3048,
             3050, 3057, 3073, 3456, 3488, 3496, 3498, 3520, 3524, 3525, 3526,
             3536, 3540, 3541, 3554, 3555, 3586, 3587, 3592, 3594, 3596, 3597,
             3601, 3612, 3613, 3614, 3623, 3641, 4544, 4560, 4564, 4565, 4576,
             4578, 4579, 4584, 4586, 4593, 4610, 4611 \\
$(54 , 34)$ & 432, 864 \\ 
$(65 , 41)$ & 73, 231, 235, 313, 411, 415, 417, 457, 459, 462, 463, 470, 471, 543,
             553, 609, 617, 619, 623, 626, 627, 639, 811, 815, 822, 823, 825, 830,
             831, 834, 835, 859, 865, 871, 873, 879, 889, 1017, 1033, 1081, 1083,
             1086, 1087, 1097, 1106, 1107, 1113, 1127, 1287, 1289, 1298, 1299,
             1308, 1309, 1310, 1319, 1334, 1335, 1519, 1526, 1527, 1537, 1550,
             1551, 1563, 1707, 1711, 1718, 1719, 1730, 1731, 1745, 1746, 1747,
             1758, 1759, 1767, 1778, 1779, 1921, 1931, 1934, 1935, 1948, 1949,
             1950, 1964, 1965, 1966, 1977, 1979, 
             2002, 2003, 2162, 2163, 2166,  
             2167, 2172, 2173, 2174, 2177, 2183, 2193, 2194, 2195, 2201, 2212, 
             2213, 2214, 2225, 2226, 2227, 2233, 2239, 2254, 2255, 2273, 2279,
             2281, 2290, 2291, 2299, 2306, 2307, 2311, 2326, 2327, 2329, 2343, 
             2345, 2370, 2371, 2882, 2883, 2889, 2897, 2898, 2899, 2902, 2903, 
             2907, 2910, 2911, 2923, 2924, 2925, 2926, 2934, 2935, 2948, 2949, 
             2950, 2955, 2966, 2967, 2969, 2977, 2983, 2985, 3004, 3005, 3006, 
             3030, 3031, 3414, 3415, 3419, 3422, 3423, 3433, 3435, 3436, 3437,
             3438, 3439, 3449, 3460, 3461, 3462, 3465, 3467, 3490, 3491, 3492, 
             3493, 3494, 3515, 3516, 3517, 3518, 3534, 3535, 3556, 3557, 3558,
             3591, 4324, 4325, 4326, 4332, 4333, 4334, 4344, 4346, 4348, 4349,
             4353, 4354, 4355, 4361, 4363, 4366, 4367, 4385, 4386, 4387, 4388,
             4389, 4390, 4393, 4401, 4402, 4403, 4424, 4426, 4428, 4429, 4433,
             4450, 4451, 4452, 4453, 4454, 4466, 4467, 4475, 4478, 4479, 4508,
             4509, 4510, 4545, 4546, 4547 \\
$(73 , 46)$ & 487, 649, 859, 865, 1071, 1145, 1153, 1351, 1607, 1801, 2023, 2025,
             2027, 2034, 2035, 2043, 2049, 2279, 2281, 2290, 2291, 2299, 2306,
             2307, 2401, 2411, 3035, 3038, 3039, 3041, 3051, 3052, 3053, 3054,
             3065, 3074, 3075, 3602, 3603, 3617, 3643, 4553, 4558, 4559, 4562,
             4563, 4577, 4580, 4581, 4582, 4585, 4598, 4599, 4612, 4613, 4614 \\
$(92 , 58)$ & 3567, 4491, 4513, 4521, 4551 \\
\end{longtable}

\vspace{20pt}

\textsc{Institut de Physique du Globe de Paris, Université Paris Cité, France}

{\it E-mail: }{\tt rozier$@$ipgp.fr}

\hspace{10pt}

\textsc{Brié et Angonnes, France}

{\it E-mail: }{\tt claudeterracol34$@$gmail.com}


\begin{thebibliography}{99}

\bibitem{Alb22}
M. Albert, B. Gudmundsson, B., H. Ulfarsson, Collatz meets Fibonacci, {\it Math. Mag.} {\bf 95} (2022), 130--136.

\bibitem{Bak75}
A. Baker, {\em Transcendental Number Theory}, Cambridge University Press, 1975.

\bibitem{Bar21}
D. Barina, Convergence verification of the Collatz problem, {\it J. Supercomput.}, {\bf 77} (2021), 2681--2688.

\bibitem{Cra78}
R. E. Crandall,
On the ``$3x+1$'' problem, 
{\it Math. Comp.} {\bf 32} (1978), 1281--1292.

\bibitem{Eli93}
S. Eliahou, 
The $3x + 1$ problem: new lower bounds on nontrivial cycle lengths,
{\em Discrete Math.} {\bf 118} (1993), 45--56.

\bibitem{Ell71}
W. J. Ellison, On a theorem of S. Sivasankaranarayana Pillai, {\it Sém. Théorie Nombres Bordeaux} (1971), 1--10.

\bibitem{Eve77}
C. J. Everett,
Iteration of the number theoretic function $f(2n)=n, f(2n+1)=3n+2$,
{\it Adv. Math.} {\bf 25} (1977), 42--45.

\bibitem{Gar81}
L. E. Garner,
On the Collatz $3n+1$ algorithm,
{\em Proc. Amer. Math. Soc.} {\bf 82} (1981), 19--22.

\bibitem{Kni26}
K. Knight,
Collatz high cycles do not exist,
{\em Discrete Math.} {\bf 349} (2026), 114812.

\bibitem{Kon10}
A. V. Kontorovich, J. C. Lagarias, Stochastic models for the $3x+ 1$ and $5x+1$ problems and related problems, published in \cite{Lag10}, 131--188.

\bibitem{Lag85}
J. C. Lagarias,
The $3x+1$ problem and its generalizations,
{\it Amer. Math. Monthly} {\bf 92} (1985), 3--23.
Reprinted in \cite{Lag10}.

\bibitem{Lag10}
J. C. Lagarias, editor,
{\em The Ultimate Challenge: The 3x+ 1 Problem}, American Math. Soc., Providence, RI, 2010.

\bibitem{Mar11}
A.\,W. Marshall, I. Olkin, B. C. Arnold,
{\em Inequalities: Theory of Majorization and Its Applications} (2nd ed.),
Springer, New York, 2011.

\bibitem{Nat24}
M. B. Nathanson,
Permutation patterns of the iterated Syracuse function,
{\it INTEGERS} {\bf 24A}, paper A14 (2024), 1--24.

\bibitem{OEIS}
OEIS Foundation Inc., The On-Line Encyclopedia of Integer Sequences, published electronically at \url{https://oeis.org}, 2025.

\bibitem{Oli10}
T. Oliveira e Silva,
Empirical verification of the $3x + 1$ and related conjectures, published in \cite{Lag10}, 189--207.

\bibitem{Rhi87}
G. Rhin,
Approximants de Padé et mesures effectives d’irrationalité,
in {\em Séminaire de Théorie des Nombres, Paris 1985–86}, Progr. Math. \textbf{71} Birkhäuser Verlag, 1987, 155--164.

\bibitem{Roo}
E. Roosendaal, On the 3x+1 problem, web site available at 
\url{http://www.ericr.nl/wondrous/} (accessed 21 March 2025).

\bibitem{Roz17}
O. Rozier,
The $3x+1$ problem: a lower bound hypothesis, 
{\it Funct. Approx. Comment. Math.} {\bf 56} (2017), 7--23.

\bibitem{Roz90}
O. Rozier,
Démonstration de l’absense de cycles d’une certain forme pour le problème de Syracuse, {\em Singularité} \textbf{1} No. 3 (1990), 9--10, available at \\
\url{https://www.researchgate.net/profile/Olivier-Rozier}

\bibitem{Sim05}
J. Simons, B. de Weger, Theoretical and computational bounds for $m$-cycles of the $3n+1$ problem, {\it Acta Arith.} {\bf 117} (2005), 51--70.

\bibitem{Tao22}
T. Tao, Almost all orbits of the Collatz map attain almost bounded values, {\it Forum Math. Pi} {\bf 10} (2022), 1--56.

\bibitem{Ter76}
R. Terras,
A stopping time problem on the positive integers,
{\it Acta Arith.} {\bf 30} (1976), 241--252.

\end{thebibliography}
\end{document}